\def\Ddots{\mathinner{\mkern1mu\raise\p@
\vbox{\kern7\p@\hbox{.}}\mkern2mu
\raise4\p@\hbox{.}\mkern2mu\raise7\p@\hbox{.}\mkern1mu}}
\def\XXint#1#2#3{{\setbox0=\hbox{$#1{#2#3}{\int}$}
\vcenter{\hbox{$#2#3$}}\kern-.5\wd0}}
\begin{document}
\newtheorem{theorem}{Theorem}
\newtheorem{proposition}[theorem]{Proposition}
\newtheorem{conjecture}[theorem]{Conjecture}
\def\theconjecture{\unskip}
\newtheorem{corollary}[theorem]{Corollary}
\newtheorem{lemma}[theorem]{Lemma}
\newtheorem{claim}[theorem]{Claim}
\newtheorem{sublemma}[theorem]{Sublemma}
\newtheorem{observation}[theorem]{Observation}

\newtheorem{definition}[theorem]{Definition}

\newtheorem{remark}[theorem]{Remark}

\def\C{\mathbb{C}}
\def\R{\mathbb{R}}
\def\Rn{{\mathbb{R}^n}}
\def\Rns{{\mathbb{R}^{n+1}}}
\def\Sn{{{S}^{n-1}}}
\def\M{\mathbb{M}}
\def\N{\mathbb{N}}
\def\Q{{\mathbb{Q}}}
\def\Z{\mathbb{Z}}
\def\F{\mathcal{F}}
\def\L{\mathcal{L}}
\def\S{\mathcal{S}}
\def\supp{\operatorname{supp}}
\def\essi{\operatornamewithlimits{ess\,inf}}
\def\esss{\operatornamewithlimits{ess\,sup}}

\def\earrow{{\mathbf e}}
\def\rarrow{{\mathbf r}}
\def\uarrow{{\mathbf u}}
\def\varrow{{\mathbf V}}
\def\tpar{T_{\rm par}}
\def\apar{A_{\rm par}}

\def\reals{{\mathbb R}}
\def\torus{{\mathbb T}}
\def\scriptm{{\mathcal T}}
\def\heis{{\mathbb H}}
\def\integers{{\mathbb Z}}
\def\z{{\mathbb Z}}
\def\naturals{{\mathbb N}}
\def\complex{{\mathbb C}\/}
\def\distance{\operatorname{distance}\,}
\def\support{\operatorname{support}\,}
\def\dist{\operatorname{dist}\,}
\def\Span{\operatorname{span}\,}
\def\degree{\operatorname{degree}\,}
\def\kernel{\operatorname{kernel}\,}
\def\dim{\operatorname{dim}\,}
\def\codim{\operatorname{codim}}
\def\trace{\operatorname{trace\,}}
\def\Span{\operatorname{span}\,}
\def\dimension{\operatorname{dimension}\,}
\def\codimension{\operatorname{codimension}\,}
\def\nullspace{\scriptk}
\def\kernel{\operatorname{Ker}}
\def\ZZ{ {\mathbb Z} }
\def\p{\partial}
\def\rp{{ ^{-1} }}
\def\Re{\operatorname{Re\,} }
\def\Im{\operatorname{Im\,} }
\def\ov{\overline}
\def\eps{\varepsilon}
\def\lt{L^2}
\def\diver{\operatorname{div}}
\def\curl{\operatorname{curl}}
\def\etta{\eta}
\newcommand{\norm}[1]{ \|  #1 \|}
\def\expect{\mathbb E}
\def\bull{$\bullet$\ }

\def\blue{\color{blue}}
\def\red{\color{red}}

\def\xone{x_1}
\def\xtwo{x_2}
\def\xq{x_2+x_1^2}
\newcommand{\abr}[1]{ \langle  #1 \rangle}

\newcommand{\Norm}[1]{ \left\|  #1 \right\| }
\newcommand{\set}[1]{ \left\{ #1 \right\} }
\newcommand{\ifou}{\raisebox{-1ex}{$\check{}$}}
\def\one{\mathbf 1}
\def\whole{\mathbf V}
\newcommand{\modulo}[2]{[#1]_{#2}}
\def \essinf{\mathop{\rm essinf}}
\def\scriptf{{\mathcal F}}
\def\scriptg{{\mathcal G}}
\def\scriptm{{\mathcal M}}
\def\scriptb{{\mathcal B}}
\def\scriptc{{\mathcal C}}
\def\scriptt{{\mathcal T}}
\def\scripti{{\mathcal I}}
\def\scripte{{\mathcal E}}
\def\scriptv{{\mathcal V}}
\def\scriptw{{\mathcal W}}
\def\scriptu{{\mathcal U}}
\def\scriptS{{\mathcal S}}
\def\scripta{{\mathcal A}}
\def\scriptr{{\mathcal R}}
\def\scripto{{\mathcal O}}
\def\scripth{{\mathcal H}}
\def\scriptd{{\mathcal D}}
\def\scriptl{{\mathcal L}}
\def\scriptn{{\mathcal N}}
\def\scriptp{{\mathcal P}}
\def\scriptk{{\mathcal K}}
\def\frakv{{\mathfrak V}}
\def\C{\mathbb{C}}
\def\D{\mathcal{D}}
\def\R{\mathbb{R}}
\def\Rn{{\mathbb{R}^n}}
\def\rn{{\mathbb{R}^n}}
\def\Rm{{\mathbb{R}^{2n}}}
\def\r2n{{\mathbb{R}^{2n}}}
\def\Sn{{{S}^{n-1}}}
\def\M{\mathbb{M}}
\def\N{\mathbb{N}}
\def\Q{{\mathcal{Q}}}
\def\Z{\mathbb{Z}}
\def\F{\mathcal{F}}
\def\L{\mathcal{L}}
\def\G{\mathscr{G}}
\def\ch{\operatorname{ch}}
\def\supp{\operatorname{supp}}
\def\dist{\operatorname{dist}}
\def\essi{\operatornamewithlimits{ess\,inf}}
\def\esss{\operatornamewithlimits{ess\,sup}}
\def\dis{\displaystyle}
\def\dsum{\displaystyle\sum}
\def\dint{\displaystyle\int}
\def\dfrac{\displaystyle\frac}
\def\dsup{\displaystyle\sup}
\def\dlim{\displaystyle\lim}
\def\bom{\Omega}
\def\om{\omega}
\author[F. Liu]{Feng Liu}
\address{Feng Liu:
        College of Mathematics and System Science\\
        Shandong University of Science and Technology\\
        Qingdao, Shandong 266590\\
        People's Republic of China}
\email{FLiu@sdust.edu.cn}

\author[S. Xi]{Shuai Xi}
\address{Shuai Xi:
        College of Mathematics and System Science\\
        Shandong University of Science and Technology\\
        Qingdao, Shandong 266590\\
        People's Republic of China}
\email{shuaixi@sdust.edu.cn}

\author[S. Zhu]{Shengguo Zhu}
\address{Shengguo Zhu: Mathematical Institute, University of Oxford, Oxford OX2 6GG, UK.}
\email{\tt zhus@maths.ox.ac.uk}

\keywords{Incompressible magnetohydrodynamics equations, three dimension,
Morrey spaces, global-in-time well-posedness, mild solutions.\\
\indent{2010 Mathematics Subject Classification.} 35B40, 35L60, 35Q35.
\indent{} }

\date{\today}
\title[Global mild solutions to three-dimensional magnetohydrodynamic
system]
{Global mild solutions to three-dimensional magnetohydrodynamic
system in Morrey spaces}
\maketitle

\begin{abstract}
In this article, the Cauchy problem of three-dimensional (3-D)
incompressible magnetohydrodynamic system was investigated. If the
initial $\mathcal{M}^{1,1}$ norms of the vorticity $\omega$ and the current
density $j$ are both sufficiently small, then some uniform estimates
with respect to time for the coupling terms between the fluid and
the magnetic field can be established, which lead to a global-in-time
well-posedness of mild solutions in Morrey spaces via some effective
arguments.
\end{abstract}

\section{Introduction}
Magnetohydrodynamics (\textbf{MHD}) is that part of the mechanics of
continuous media which studies the motion of electrically conducting
media in the presence of a magnetic field. The dynamic motion of fluid
and magnetic field interact strongly on each other, so the hydrodynamic
and electrodynamic effects are coupled. The applications of
magnetohydrodynamics cover a very wide range of physical objects, from
liquid metals to cosmic plasmas, for example, the intensely heated and
ionized fluids in an electromagnetic field in astrophysics, geophysics,
high-speed aerodynamics, and plasma physics. The motion of an electrically
conducting, viscous incompressible fluid in $\mathbb{R}^3$ can be described
by the Magnetohydrodynamic equations (see \cite{Cow,LL}):
$$
\partial_tu-\nu\triangle u+(u\cdot\nabla)u-(b\cdot\nabla)b+\nabla\Big(P+\frac{1}{2}|b|^2\Big)=0\ \ \ {\rm in}\ \ \  \mathbb{R}^3\times(0,\infty),\qquad \eqno(1.1)$$
$$\qquad \qquad \quad \ \  \ \ \ \  \ \  \partial_tb-\eta\triangle b+\nabla\times(b\times u)=0\ \ \ \ {\rm in}\ \ \ \mathbb{R}^3\times(0,\infty),\eqno(1.2)$$
$$\qquad \qquad \quad \qquad \qquad \qquad    \ \ \ \  \ \ {\rm div} u={\rm div}b=0\ \ \   {\rm in}\ \ \  \mathbb{R}^3\times(0,\infty), \eqno(1.3)$$
and the initial data
$$u(x,0)=u_0(x),\quad b(x,0)=b_0(x)\quad \text{for} \quad  x\in\mathbb{R}^3,\eqno(1.4)$$
where $(x=(x_1,x_2,x_3)^\top,\ t)\in \mathbb{R}^3\times \mathbb{R}_+$,
the unknowns $u(x,t)=(u^1,u^2,u^3)^\top(x,t)$, $P(x,t)$ and
$b(x,t)=(b^1,b^2,b^3)^\top(x,t)$ denote the fluid velocity, pressure
and magnetic field, respectively. Here, $P_T=P+\frac{1}{2}|b|^2$ is
the total kinetic pressure, $\nu$ is the kinematic viscosity and
$\eta$ is the resistivity. For simplicity, we assume throughout that
both $\nu$ and $\eta$ are equal to 1.

The global existence of \textbf{MHD} equations with finite energy initial data, i.e.,
$(u_0,b_0)\in L^2(\mathbb{R}^3)$ was independently proved by
Duvaut-Lions \cite{DL} and Sermange-Temam \cite{ST}. Mild solutions,
for the Navier-Stokes ({\bf NS}) equations, were first constructed
by Kato-Fujita \cite{fujk64} in the spaces $H^s(\mathbb{R}^d)$ for
$s\geq \frac{d}{2}-1$, and then in $L^p(\mathbb{R}^d)$ ($p\geq d$)
spaces (classical admissible spaces) by Kato \cite{Ka1}. The other
global well-posedness of mild solutions for small initial data is
due to Cannone \cite{c97} and Planchon \cite{p96} in the Besov space
$B^{-1+\frac{d}{p}}_{p,\infty}(\mathbb{R}^d)$, with $1<p<\infty$,
Koch-Tataru \cite{koct01} in the space $BMO^{-1}$, and Lei-Lin
\cite{ll11} in the space
$\chi^{-1}=\left\{f\in\mathcal{D}'(\mathbb{R}^3):\int_{\mathbb{R}^3}
|\xi|^{-1}|\hat{f}|(\xi){\rm d}\xi<\infty\right\}$.

Our purpose, in this paper, is to establish well-posedness results
of {\bf MHD} equations with the initial data highly concentrated
in ``small sets''(``rough data''), such as the initial vortex
profiles are vortex rings and filaments. Such kind of rough initial
data is of infinite energy and thus outside the scope of the
standard energy method and the classical Leray's theory (\cite{DL},
\cite{ST}).

For the case of the multi-dimensional incompressible Navier-Stokes
({\bf NS}) equations, the well-posedness with such kind of rough
initial data has been studied by many mathematicians in this field.
We attribute the corresponding development into two stages:

{\bf Stage I.} (Two-dimensional (2-D) {\bf NS} equation). The global
existence of solutions to 2-D {\bf NS} equations with large initial
data in the space of finite measures $\mathcal{M}(\mathbb{R}^2)$ was
first shown by Cottet \cite{Co}, and independently by
Giga-Miyakawa-Osada \cite{GMO}, and the proof in \cite{GMO} was
simplified by Kato \cite{Ka3}. Under the assumption that the atomic
part of the initial vorticity is sufficiently small, uniqueness was
also proved in \cite{GMO,Ka3}. Later, Gallagher-Gallay \cite{GG}
extended the uniqueness theory to the general initial data in
$\mathcal{M}(\mathbb{R}^2)$ via some arguments developed in \cite{GW},
which allows to handle large Dirac masses. In summary, {\bf NS}
equations are well-posed for arbitrary data in the space of
$\mathcal{M}(\mathbb{R}^2)$.

{\bf Stage II.} (3-D {\bf NS} equation). For 3-D {\bf NS} equations,
to the best of our knowledge, the progress is not as good as that of
2-D case, of which the reason is mainly due to the different forms
of vorticity stretching term in the  equation of vorticity, which
can be shown as follows,
$$(u\cdot\nabla)\omega \mbox{\quad in } \ \mathbb{R}^2 \quad \text{and} \quad   (u\cdot\nabla)\omega-(\omega\cdot\nabla)u \mbox{\quad in } \ \mathbb{R}^3.$$
As a result, compared with the 2-D case, more estimates on $u$ are
required to control the $ L^1(\mathbb{R}^3)$ norm of the vorticity.
The first attempt in the measure direction, for the well-posedenss
of the 3-D {\bf NS} equations, was done by Cottet-Soler \cite{CS2},
and they obtained the global existence and uniqueness of the solution
when the initial data is a linear combination of filament measures,
which is sufficiently small. Here, a filament measure is a vector
valued measure supported on a finite disjoint union of uniformly $C^1$
but non-closed curves. This implies that the total variation of the
initial data in \cite{CS2} is  finite.

In order to get rid of the finite total variation assumption, Giga
and Miyakawa introduced the Morrey-type space
$\mathcal{M}^{p}(\mathbb{R}^3)$ of measure (see {\bf Definition 2.1})
in \cite{GM2}, and established the existence of global solutions whose
initial vorticity is a small Radon measure belonging to
${\mathcal{M}}^\frac{3}{2}(\mathbb{R}^3)$. In order to solve {\bf NS}
equations for the velocity $u(x,t)$ instead of the vorticity $\omega$,
Kato \cite{Ka2} introduced a more general Morrey space $\mathcal{M}^{m,
\lambda}(\mathbb{R}^3)$ (see {\bf Definition 2.1}) and established a
more general result: if $u_0$ is small enough in
$\mathcal{M}^{m,3-m}(\mathbb{R}^3)$ for $1<m\leq 3$, then there is a
global solution $u\in\mathcal{M}^{m,3-m}(\mathbb{R}^3)$, which is
unique subject to certain restrictions. This partially generalizes the
result of \cite{GM2} by eliminating the differentiability of $u_0$
except the case for  $m=1$. The analysis on the meaningful case, $m=1$,
which admits certain measures, encounters some essential mathematical
difficulties, since the Calderon-Zygmund type singular operator is not
bound in $L^1(\mathbb{R}^3)$. Some other related  progress, for 3-D
{\bf NS} equations in Morrey  space, can also be found in
\cite{ fed93,{Tay},{can95},{kozy97}}.

For the {\bf MHD} system, the situation is more complicated due to
the strong coupling effect between the velocity vector field $u(x,t)$
and the magnetic field $b(x,t)$. In order to clearly analyse the
coupling effect of this system, we give the equations on the time
evolution of the vorticity $\omega=(\omega^1,\omega^2,\omega^3)^\top$
and the current density $j=(j^1,j^2,j^3)^\top$:
$$\begin{array}{ll}
\left\{\aligned
&\partial_t\omega-\triangle \omega+\partial_{x_i}( u^i\omega-\omega^iu-b^ij+j^ib)=0,\\[4pt]
&\partial_t j-\triangle j+\nabla\times((u\cdot\nabla)b-(b\cdot\nabla)u)=0,\\[4pt]
&u=K\ast\omega,\quad b=K\ast j,\\[4pt]
&{\rm div}\omega={\rm div}j=0,\\[4pt]
&\omega(x,0)=\omega_0,\quad j(x,0)=j_0,
\endaligned\right.
\end{array}\eqno(1.5)$$
where $K(x)=-\frac{1}{4\pi}\frac{(x_1,x_2,x_3)}{|x|^3}$ is the
Biot-Savart kernel. Based some elaborate analysis of  the above system,
in the current paper, we will present the global mild solutions of (1.5).
If the $W^{1,1}$ norms of $\omega$ and $j$ are sufficiently small
initially, then some uniform estimates with respect to time for the
coupling terms between the fluid and the magnetic field can be obtained,
which lead to a global-in-time well-posedness of mild solutions in
Morrey spaces via some effective arguments.

\bigskip

\section{Preliminary definitions and main results}\label{S2}
In this section, we will give some necessary definitions and then state
our main result. Moreover, the main strategy of our following proof will
be also discussed.
\bigskip

\quad\hspace{-20pt}{\bf 2.1. Preliminary definitions}

\medskip
We start with the definition of Morrey spaces.

\medskip

\quad\hspace{-20pt}{\bf Definition 2.1} (\cite{GM2,Ka2}).\quad{\it
Let $\mu$ be a measurable function. For $1\leq p<\infty$ and
$0\leq \lambda<3$, the Morrey space
$\mathcal{M}^{p,\lambda}(\mathbb{R}^3)$ is defined as
\begin{equation*}
\|\mu\|_{\mathcal{M}^{p,\lambda}(\mathbb{R}^3)}=\left\{\aligned
&\sup_{x\in\mathbb{R}^3,\ r>0}r^{-\lambda}\|\mu\|_{TV(B(x,r))},\ \ p=1,\\
&\sup_{x\in\mathbb{R}^3,\ r>0}r^{-{\lambda}/{p}}\left(\int_{B(x,r)}|\mu|^p(y){\rm d}y\right)^{{1}/{p}},\ \ p>1.
\endaligned\right.
\end{equation*}
Here $B(x,r)$ is the open ball in $\mathbb{R}^3$ with radius
$r$ centered at $x$ and $\|\mu\|_{TV(B(x,r))}$ is the total variation
of $\mu$. Particularly, we denote
$\mathcal{M}^{1,{3}/{q'}}(\mathbb{R}^3)=\mathcal{M}^q(\mathbb{R}^3)$,  for all $1\leq q<\infty$ (see \cite{GM2}).}

\medskip

\quad\hspace{-20pt}{\bf Definition 2.2} (\cite{BCF}).\quad{\it
Let $G(x,t)=(4\pi t)^{-\frac{3}{2}}\exp(-\frac{|x|^2}{4t})$,
$\omega=\nabla\times u$, $j=\nabla\times b$, $u=(u^1,u^2,u^3)$,
$\omega=(\omega^1,\omega^2,\omega^3)$, $j=(j^1,j^2,j^3)$ and
$b=(b^1,b^2,b^3)$. We say that a pair $(\omega,\,j)$ constitutes the
{\it mild solution} to the IVP $(1.5)$ if they satisfy: for all
$0<t<\infty$, $x\in\mathbb{R}^3$ and $i\in\{1,2,3\}$,
$$\begin{array}{ll}
\left\{\aligned
\omega(x,t)&=G(\cdot,t)\ast\omega_0(x)\\
&\quad\displaystyle+\int_{0}^t\int_{\mathbb{R}^3}G(x-y,t-s)\partial_{y_i}( u^i\omega-u\omega^i-b^ij+bj^i)(y,s){\rm d}y{\rm d}s,\\
j(x,t)&=G(\cdot,t)\ast j_0(x)\\
&\quad\displaystyle+\int_{0}^t\int_{\mathbb{R}^3}G(x-y,t-s)
\nabla_y\times((u\cdot\nabla)b-(b\cdot\nabla)u)(y,s){\rm d}y{\rm d}s
\endaligned\right.
\end{array}\eqno(2.1)$$
and
$$\omega(x,t)\xrightarrow{weak^*}\omega_0(x)\quad \text{and} \quad j(x,t)\xrightarrow{weak^*}j_0(x) \quad as \quad t \rightarrow 0.\eqno(2.2)$$
Here,  formula $(2.1)$ is understood in the sense of $L^1(\mathbb{R}^3)$
and formula $(2.2)$ means that for any given
$\phi(x)\in C^\infty_0(\mathbb{R}^3)$, the functions
$$\begin{array}{ll}
g(t)=\left\{\aligned
&\int_{\mathbb{R}^3}\omega(x,t)\phi(x){\rm d}x,\quad t>0,\\
&\int_{\mathbb{R}^3} \phi(x){\rm d}\omega_0,\quad t=0,\endaligned\right.
\end{array}\eqno(2.3)$$
and
$$\begin{array}{ll}
h(t)=\left\{\aligned
&\int_{\mathbb{R}^3}j(x,t)\phi(x){\rm d}x,\quad t>0,\\
&\int_{\mathbb{R}^3} \phi(x){\rm d}j_0,\quad t=0,
\endaligned\right.
\end{array}\eqno(2.4)$$
are both  continuous at $t=0$}.

Here and throughout this paper, the letter $C$ or $c$, sometimes
with certain parameters, will stand for positive constants not
necessarily the same one at each occurrence, but are independent
of the essential variables. We set $\mathbb{R}^{+}:=(0,\infty)$.
For $1\leq p\leq\infty$, we denote $p'$ by the conjugate index
of $p$, that is $1/p+1/p'=1$ (here we set $1'=\infty$ and
$\infty'=1$). We also adopt the following simplified notations:
\begin{equation*}
\|f\|_p=\|f\|_{\mathcal{M}^{p}(\mathbb{R}^3)},\ \ \ \|f\|_{p,\lambda}=\|f\|_{\mathcal{M}^{p,\lambda}(\mathbb{R}^3)},\ \ \|f\|_\infty=\|f\|_{L^\infty(\mathbb{R}^3)}.
\end{equation*}
We assume that all the functions in this paper are real-valued.

\bigskip

\quad\hspace{-20pt}{\bf 2.2. Main results}

\medskip

We now state our main results as follows. The Global-in-time well-posedness of $(1.5)$ can be formulated
as follows:

\begin{theorem}\label{thm2.2}
{\bf (Global-in-time well-posedness).}\quad We define
\begin{equation*}
\begin{array}{ll}
&A_1:=\{(p,q)|p\in(1,2),\ \ q\in[1,3-p), \ \ (p-2)(q-4)\leq2,\ \ \ 3<2p+q<6\},\\
&A_2:=\Big\{(p,q)|q\in[1,2),\ \ p\in\Big(\frac{2(3-q)}{4-q},3-q\Big)\Big\}.
\end{array}
\end{equation*}
Assume that
$$\|\omega_0\|_{1,1}\ \ {\rm and}\ \ \| j_0\|_{1,1}\  {\rm are\ small\ enough},$$
then
\begin{itemize}
\item[(i)] if $(p,q)\in A_1$, there exists a  unique mild solution
$(\omega,j)$ in $[0,\infty)\times \mathbb{R}^3$ to the problem $(1.5)$
such that
$$\begin{array}{ll}
&t^{1-\frac{3-q}{2p}}\omega,\ \ t^{1-\frac{3-q}{2p}}j\in L^\infty(\mathbb{R}^{+};\mathcal{M}^{p,q}(\mathbb{R}^3)),\vspace{1ex}\\
&\omega,\ \ j\in L^{\frac{2p}{2p-3+q}}(\mathbb{R}^{+};\mathcal{M}^{p,q}(\mathbb{R}^3)),\vspace{1ex}\\
&t^{\frac{3}{2}-\frac{3-q}{2p}}\nabla\omega,\ \ t^{\frac{3}{2}-\frac{3-q}{2p}}\nabla j\in L^\infty(\mathbb{R}^{+};\mathcal{M}^{p,q}(\mathbb{R}^3)),\vspace{1ex}\\
&\nabla\omega,\ \ \nabla j\in L^{\frac{2p}{3p-3+q}}(\mathbb{R}^{+};\mathcal{M}^{p,q}(\mathbb{R}^3)),
\end{array}$$
and  the solution  $(\omega,j)$ solves $(1.5)$ in the classical
sense for $t>0$;

\item[(ii)] if $(p,q)\in A_1\cap A_2$,  there exists a unique mild
solution $(\omega,j)$ in $[0,\infty)\times \mathbb{R}^3 $ to the problem
$(1.5)$ such that
$$\begin{array}{ll}
&t^{\frac{q-1}{2}}\omega,\ \ t^{\frac{q-1}{2}}j\in L^\infty(\mathbb{R}^{+};\mathcal{M}^{1,q}(\mathbb{R}^3)),\vspace{1ex}\\
&\omega,\ \ j\in L^{\frac{2}{q-1}}(\mathbb{R}^{+};\mathcal{M}^{1,q}(\mathbb{R}^3)),\vspace{1ex}\\
&t^{\frac{q}{2}}\nabla\omega,\ \ t^{\frac{q}{2}}\nabla j\in L^\infty(\mathbb{R}^{+};\mathcal{M}^{1,q}(\mathbb{R}^3)),\vspace{1ex}\\
&\nabla\omega,\ \ \nabla j\in L^{\frac{2}{q}}(\mathbb{R}^{+};\mathcal{M}^{1,q}(\mathbb{R}^3)),
\end{array}$$
and the solution $(\omega,j)$ solves $(1.5)$ in the classical sense
for $t>0$.
\end{itemize}
\end{theorem}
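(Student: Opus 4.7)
The plan is to carry out a Kato-type fixed-point scheme on the mild formulation (2.1), worked in a Banach space $X_{p,q}$ built from the four scale-invariant norms appearing in part (i) of the conclusion (respectively the two norms in part (ii)). The weights $t^{1-(3-q)/(2p)}$, $t^{3/2-(3-q)/(2p)}$ and the time-Lebesgue indices $\frac{2p}{2p-3+q}$, $\frac{2p}{3p-3+q}$ are exactly the scale-invariant exponents forced by the heat equation with initial vorticity in $\mathcal{M}^{1,1}$, so $X_{p,q}$ is the natural landing space. The abstract goal is then: (a) show that the free solution $G(\cdot,t)\ast\omega_0$, $G(\cdot,t)\ast j_0$ lies in $X_{p,q}$ with norm controlled by $\|\omega_0\|_{1,1}+\|j_0\|_{1,1}$, and (b) show that the nonlinear Duhamel operator is bilinearly bounded on $X_{p,q}\times X_{p,q}$. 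Under the smallness hypothesis, the Picard iteration then converges to the unique mild solution, and the regularization property $\nabla\omega,\nabla j\in L^\infty_tX$-type norms implies that the solution is classical for $t>0$.

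For (a), combining the pointwise bound on $G(\cdot,t)$ with the Morrey interpolation of Kato \cite{Ka2} gives $\|G(\cdot,t)\ast f\|_{p,q}\lesssim t^{-(1-(3-q)/(2p))}\|f\|_{1,1}$ together with the corresponding gradient estimate, and these distribute between the $L^\infty_t$-weighted and $L^r_t$-unweighted components of $X_{p,q}$ by elementary interpolation in $t$. The weak-$\ast$ continuity at $t=0$ required by (2.2) then follows by pairing against $\phi\in C^\infty_0(\mathbb{R}^3)$, dominated convergence for the heat semigroup, and density of smooth measures in $\mathcal{M}^{1,1}$.

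For (b), reconstruct $u=K\ast\omega$ and $b=K\ast j$ via Biot-Savart, which is a Calder\'on-Zygmund operator of order $-1$ and, on $\mathcal{M}^{p,q}$ with $p>1$, maps into a better-indexed Morrey space $\mathcal{M}^{p^\ast,q}$. Applying H\"older in $\mathcal{M}^{p,q}$ to each quadratic term $u^i\omega-u\omega^i$, $-b^ij+bj^i$ and $(u\cdot\nabla)b-(b\cdot\nabla)u$, pulling the spatial derivative $\partial_{y_i}$ (or $\nabla\times$) onto the heat kernel at cost $(t-s)^{-1/2}$, and inserting the ansatz weights $s^{-\alpha}$, $s^{-\beta}$ reduces the entire bilinear estimate to convergence of Beta-type integrals
$$\int_0^t (t-s)^{-\gamma}\,s^{-\alpha-\beta}\,ds\ \lesssim\ t^{1-\gamma-\alpha-\beta}.$$
The arithmetic constraints defining $A_1$ (and $A_1\cap A_2$ for part (ii)) are precisely those that keep each exponent in $(0,1)$ with total sum below one, so that the bilinear bound $\|B(\cdot,\cdot)\|_{X_{p,q}}\lesssim\|\cdot\|_{X_{p,q}}\|\cdot\|_{X_{p,q}}$ holds and the contraction principle applies.

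The main obstacle is the endpoint case (ii), where $p=1$ and Biot-Savart fails to be bounded on $\mathcal{M}^{1,q}$, as the introduction already flags. The way around is to never close H\"older at $p=1$: first use heat smoothing to upgrade $\mathcal{M}^{1,q}$ data to $\mathcal{M}^{p,q}$ with $p>1$, perform the bilinear Morrey-H\"older step at that improved exponent (which is exactly why the range $(p,q)\in A_1\cap A_2$ intervenes), and only then smooth back down into $\mathcal{M}^{1,q}$ by another application of $G(\cdot,t-s)$. A second genuinely three-dimensional difficulty is the vortex-stretching term $(\omega\cdot\nabla)u$ and its MHD cousins hidden inside $\nabla\times((u\cdot\nabla)b-(b\cdot\nabla)u)$; controlling these forces one to carry the gradient norms on $\omega,j$ (the third and fourth lines of the conclusion) through the iteration rather than closing on $\omega,j$ alone. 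Moreover, since $b$-terms feed the $\omega$-equation and $u$-terms feed the $j$-equation, the estimates must be closed simultaneously on the coupled pair $(\omega,j)$, not on either unknown in isolation.
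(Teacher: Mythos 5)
Your proposal follows essentially the same route as the paper: a Picard/successive-approximation scheme on the mild formulation (2.1), closed simultaneously on the coupled pair $(\omega,j)$ in the weighted $L^\infty_t$ and $L^r_t$ Morrey norms of the conclusion, with the free part controlled by heat-kernel smoothing $\mathcal{M}^{1,1}\to\mathcal{M}^{p,q}$, the bilinear part by Morrey--H\"older plus Biot--Savart at exponents $p>1$ and Beta-integral bookkeeping, and the $\mathcal{M}^{1,q}$ endpoint of part (ii) handled exactly as you describe --- by never applying Biot--Savart at $p=1$ but instead interpolating against the $p>1$ bounds already established. This is the content of Theorem 3.1 and its five-step proof, of which Theorem 2.2 is the special case $p_0=q_0=\tilde{q_0}=1$.
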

\begin{remark}\label{rem1}
By (ii) of Theorem \ref{thm2.2}, taking $q=1$, it is easy to see that
there exists a global unique mild solution $(\omega,j)$ in
$[0,\infty)\times \mathbb{R}^3 $ to the problem $(1.5)$ such that
$$\begin{array}{ll}
&\omega,\ \ j\in L^\infty(\mathbb{R}^{+};\mathcal{M}^{1,1}(\mathbb{R}^3)),\vspace{1ex}\\
&t^{\frac{1}{2}}\nabla\omega,\ \ t^{\frac{1}{2}}\nabla j\in L^\infty(\mathbb{R}^{+};\mathcal{M}^{1,1}(\mathbb{R}^3)),\vspace{1ex}\\
&\nabla\omega,\ \ \nabla j\in L^{2}(\mathbb{R}^{+};\mathcal{M}^{1,1}(\mathbb{R}^3)),
\end{array}$$
when $\|\omega_0\|_{1,1}$ and $\| j_0\|_{1,1}$ are small enough.
For this case, the result obtained above can be regarded as an
extension of the theory for {\bf NS} equations in \cite{GM2} to
{\bf MHD}. However, our proof indeed needs some new ideas due
to the strong coupling between the fluid and the magnetic field.
\end{remark}

\bigskip

\quad\hspace{-20pt}{\bf 2.3. Main strategies}

\medskip

The main purpose of the current work is to prove the global-in-time
well-posedness of mild solutions to the Cauchy problem $(1.5)$ , which
generated a sequence from the iterative scheme of the vorticity
equations and converged to a root by the Fixed-point theorem. Actually,
in order to eliminate the total kinetic pressure term $\nabla P_T$ in
$(1.1)$, one usual method is to apply the Leray projector $\mathbb{P}$
in \cite{Ka2} ($\mathbb{P}$ is a matrix $3\times3$ with elements
$(\mathbb{P})_{k,j}=\delta_{kj}+\mathcal{R}_k\mathcal{R}_j$, where
$\mathcal{R}_k=\partial_k(-\triangle)^{{1}/{2}}\,(k=1,2,3)$ are the
Riesz transforms) or the curl operator in \cite{GM2}. Considering the
unboundedness of the Leray projector $\mathbb{P}$ in
$\mathcal{M}^{p}(\mathbb{R}^3)$, we have to use the curl operator in
this paper to handle $\nabla P_T$ along the spirit of  \cite{GM2}.

However, the approaches used in \cite{GM2} for establishing the
existence of the NS system fail to apply to the corresponding problem
of magnetohydrodynamic system directly due to some new mathematical
difficulties:
\begin{itemize}
\item the strong coupling between the magnetic filed $b$ and the fluid
velocity $u$;\\

\item
one can not control $\|\nabla u\|_{p}$ and $\|\nabla b\|_{p}$ by
$\|\nabla\times u\|_{p}$ and $\|\nabla\times b\|_{p}$ in $\mathcal{M}^{p}(\mathbb{R}^3)$,
\end{itemize}
which are  mainly reflected in estimating the cross term $(u\cdot\nabla)b$
and $(b\cdot\nabla)u$.

Hence, compared with the theory for pure NS system, in order to
overcome the difficulties mentioned above, some new observations
are indeed required. Actually, we found that the following
estimate plays a key role in establishing the global-in-time
estimates for the desired  mild solutions:
\begin{equation*}
\|(b\cdot\nabla)u\|_{1,\lambda}\leq\|\nabla u\|_{\theta,\tau}\|b\|_{r,s}\leq\|\omega\|_{\theta,\tau}\|b\|_{r,s}, \quad \mbox{ for \ }1=\frac{1}{\theta}+\frac{1}{r}\mbox{\ and \ } \lambda=\frac{\tau}{\theta}+\frac{s}{r},
\end{equation*}
which means that we need to estimate $\|\omega\|_{\theta,\tau}$ and
$\|b\|_{r,s}$. For this purpose,

\begin{itemize}
\item first, we estimate $\|\omega\|_{\theta,\tau}$ when the initial
norms $\|\omega_0\|_{1,1}$ and $\| j_0\|_{1,1}$ are both sufficiently
small. Lemma 3 in the Appendix B is introduced for this step.
Particularly, we established the bounds for the map
$$T:\mathcal{M}^{q_1,\lambda_1}(\mathbb{R}^3)\rightarrow
\mathcal{M}^{q_2,\lambda_2}(\mathbb{R}^3)$$
with $0\leq\lambda_1\leq\lambda_2<3$, which is new for the case
$0\leq\lambda_1<\lambda_2<3$, if $T$ is convolution operator with
heat kernel (see Proposition 4 in Appendix A);\\

\item second, under the same initial data, we can control $\|b\|_{r,s}$
by $\|\omega\|_{\theta,\tau}$ and $\|\omega\|_{1,\lambda}$.

\end{itemize}

At last, we used a inequality that is similar to Lemma 3 in the
Appendix B to get the well-posedness of the desired  global solution.

\bigskip

\quad\hspace{-20pt}{\bf 2.4. Outline }

\medskip

The rest of the paper is organized as follows. In Section \ref{S3}, we give the proof of Theorem  \ref{thm3.1}, which is divided into the following two steps: 

\begin{itemize}
\item Step (i): in Subsection 3.1, we establish the global-in-time
well-posedness of mild solutions in $\mathcal{M}^{p,q}(\mathbb{R}^3)$
for some $p>1$ and $q\in[0,3)$;\\

\item Step (ii): in Subsection 3.2, we establish the global-in-time
well-posedness of mild solutions in $\mathcal{M}^{1,q}(\mathbb{R}^3)$
for some $q\in[0,4)$.
\end{itemize}
The proof of Theorem \ref{thm2.2}, which can be regarded as the special case of Theorem  \ref{thm3.1}, is listed at the end of the Section \ref{S3}. 

In Appendix,  we give some basic properties of Morrey spaces and some technical lemmas. Among those,  Appendix A is devoted to presenting some basic properties of Morrey spaces and
some inequalities for Riesz potential and convolution operators
with heat kernel and Biot-Savart kernel on Morrey spaces, and
Appendix B is established to give some preliminary
lemmas, which play an important role in our proof.

\section{Proof of Theorem \ref{thm2.2}}\label{S3}
This section will give the proof of Theorem \ref{thm2.2}. Actually,
Theorem \ref{thm2.2} can be deduced by the following result, which
is of interest in its own right.

\begin{theorem}\label{thm3.1}
We set
\begin{equation*}
\begin{array}{ll}
&E_1:=\{(p,q,p_0,q_0)|p_0\in[1,\infty),\ \ p_0\leq p,\ q_0\in[0,3),\ \ 2p_0+q_0=3,\vspace{1ex}\\
&\qquad\quad1<p,\ \ q_0\leq q<3,\ \ p+q<3,\ \ 3<2p+q<6,\ \ (q-4)(p-2)\leq2\};\vspace{2ex}\\
&E_2:=\Big\{(p,q,p_0,q_0,\tilde{q_0},q_1)|(p,q,p_0,q_0)\in E_1,\ {\rm there\ exist}\ q_2,\,q_3\in[0,3)\vspace{1ex}\\
&\qquad\quad{\rm and}\ \tilde{p}\in(1,\min\{p,p'\})\ {\rm such\ that}\ 0\leq q_1-\tilde{q_0}<1,\ 0\leq q_1-q_2<1,\vspace{1ex}\\
&\qquad\quad q_2=\frac{q_3}{p'}+\frac{q}{p},\ \frac{1}{\tilde{p}}=\frac{1}{p'}+\frac{1}{3-q_3},\ \ \frac{q_3}{\tilde{p}}=q_1\big(\frac{p'}{\tilde{p}}-\frac{p'}{p}\big)+\frac{q}{p}\big(p'-\frac{p'}{\tilde{p}}\big),\vspace{1ex}\\
&\qquad\quad\frac{q_2-\tilde{q_0}+1}{2}=\frac{q_1-\tilde{q_0}}{2}\big(\frac{p'}{\tilde{p}}-\frac{p'}{p}\big)+\frac{2p-3+q}{2p}(1+p'-\frac{p'}{\tilde{p}}\big)\Big\}.
\end{array}
\end{equation*}

\begin{itemize}
\item [(i)] Let $(p,q,p_0,q_0)\in E_1$. Assume that
$$\|\omega_0\|_{p_0,q_0}\ \ {\rm and}\ \ \| j_0\|_{p_0,q_0}\ \ {\rm are\ small\ enough}.$$
Then there exists a global unique mild solution $(\omega,j)$ on
$\mathbb{R}^{+}$ of $(1.5)$ such that
$$\begin{array}{ll}
&t^{1-\frac{3-q}{2p}}\omega,\ \ t^{1-\frac{3-q}{2p}}j\in L^\infty(\mathbb{R}^{+};\mathcal{M}^{p,q}(\mathbb{R}^3)),\vspace{1ex}\\
&\omega,\ \ j\in L^{\frac{2p}{2p-3+q}}(\mathbb{R}^{+};\mathcal{M}^{p,q}(\mathbb{R}^3)),\vspace{1ex}\\
&t^{\frac{3}{2}-\frac{3-q}{2p}}\nabla\omega,\ \ t^{\frac{3}{2}-\frac{3-q}{2p}}\nabla j\in L^\infty(\mathbb{R}^{+};\mathcal{M}^{p,q}(\mathbb{R}^3)),\vspace{1ex}\\
&\nabla\omega,\ \ \nabla j\in L^{\frac{2p}{3p-3+q}}(\mathbb{R}^{+};\mathcal{M}^{p,q}(\mathbb{R}^3)).
\end{array}\eqno(3.1)$$
\item[(ii)] Let $(p,q,p_0,q_0,\tilde{q_0},q_1)\in E_2$. Assume
that
$$\|\omega_0\|_{p_0,q_0}\ \ {\rm and}\ \ \| j_0\|_{p_0,q_0}\ \ {\rm are\ small\ enough},$$
and
$$\omega_0,\,j_0\in\mathcal{M}^{1,\tilde{q_0}}(\mathbb{R}^3).$$
Then there exists a global unique mild solution $(\omega,j)$ on
$\mathbb{R}^{+}$ of $(1.5)$ such that
$$\begin{array}{ll}
&t^{\frac{q_1-\tilde{q_0}}{2}}\omega,\ \ t^{\frac{q_1-\tilde{q_0}}{2}}j\in L^\infty(\mathbb{R}^{+};\mathcal{M}^{1,q_1}(\mathbb{R}^3)),\vspace{1ex}\\
&\omega,\ \ j\in L^{\frac{2}{q_1-\tilde{q_0}}}(\mathbb{R}^{+};\mathcal{M}^{1,q_1}(\mathbb{R}^3)),\vspace{1ex}\\
&t^{\frac{1+q_1-\tilde{q_0}}{2}}\nabla\omega,\ \ t^{\frac{1+q_1-\tilde{q_0}}{2}}\nabla j\in L^\infty(\mathbb{R}^{+};\mathcal{M}^{1,q_1}(\mathbb{R}^3)),\vspace{1ex}\\
&\nabla\omega,\ \ \nabla j\in L^{\frac{2}{1+q_1-\tilde{q_0}}}(\mathbb{R}^{+};\mathcal{M}^{1,q_1}(\mathbb{R}^3)).
\end{array}\eqno(3.2)$$
\end{itemize}
\end{theorem}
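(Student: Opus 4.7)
The plan is to prove both parts by a Banach fixed-point argument applied to the Duhamel formulation $(2.1)$. For part (i), I would work in the path space of pairs $(\omega,j)$ equipped with the four weighted norms listed in $(3.1)$, and for part (ii) in the analogous space built from $(3.2)$. The nonlinear map $\Phi$ sends $(\omega,j)$ to the right-hand side of $(2.1)$ with $u=K\ast\omega$ and $b=K\ast j$, where $K$ is the Biot--Savart kernel. To apply the contraction principle one needs three ingredients: (a) bounds for the linear evolution $G(\cdot,t)\ast\omega_0$ and $G(\cdot,t)\ast j_0$ in each component of the norm, (b) bilinear estimates for the nonlinearities appearing in $(2.1)$, and (c) verification that the time exponents produced by the Duhamel integrals give finite beta-type integrals, which is exactly what the algebraic conditions defining $E_1$ and $E_2$ encode.

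First I would handle the linear part by invoking Proposition 4 in Appendix A: convolution with the heat kernel sends $\mathcal{M}^{p_0,q_0}(\mathbb{R}^3)$ to $\mathcal{M}^{p,q}(\mathbb{R}^3)$ with an explicit time decay, and a derivative costs an additional factor $t^{-1/2}$. The scaling relation $2p_0+q_0=3$ built into $E_1$ is precisely what lines up the decay exponent with the weights $t^{1-\frac{3-q}{2p}}$ and $t^{\frac32-\frac{3-q}{2p}}$ and makes $\|\omega\|_{p,q}$ lie in $L^{\frac{2p}{2p-3+q}}_t$; the same scaling yields the $\mathcal{M}^{1,\tilde{q_0}}\to\mathcal{M}^{1,q_1}$ bound needed in part (ii).

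Second I would estimate the nonlinearities. The transport-type terms $\partial_{y_i}(u^i\omega-u\omega^i)$ and their magnetic analogues can be recast by Morrey H\"older inequalities combined with the Biot--Savart recovery $u=K\ast\omega$, again controlled by Proposition 4. The crucial MHD coupling terms $b^i j-b j^i$ and $\nabla_y\times((u\cdot\nabla)b-(b\cdot\nabla)u)$ are controlled exactly via the inequality highlighted in the strategy section,
\begin{equation*}
\|(b\cdot\nabla)u\|_{1,\lambda}\leq\|\nabla u\|_{\theta,\tau}\|b\|_{r,s}\leq\|\omega\|_{\theta,\tau}\|b\|_{r,s},
\end{equation*}
with $1=\tfrac1\theta+\tfrac1r$ and $\lambda=\tfrac{\tau}\theta+\tfrac{s}r$, and analogous H\"older-in-Morrey splittings. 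Plugging these bilinear bounds into the Duhamel integral and again applying the heat-kernel gain from Proposition 4 produces a bound of the schematic form $\|\Phi(\omega,j)\|_X\leq C\|(G\ast\omega_0,G\ast j_0)\|_X+C\|(\omega,j)\|_X^2$, so the smallness of $\|\omega_0\|_{p_0,q_0}+\|j_0\|_{p_0,q_0}$ closes the fixed-point argument and yields uniqueness. Classical regularity for $t>0$ then follows by bootstrapping with the smoothing of the heat semigroup, and the weak$^\ast$ continuity at $t=0$ from the linear estimate and a density/duality argument.

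The main obstacle is part (ii), where $p=1$ and one loses the Calder\'on--Zygmund-type passage from $\|\omega\|_{1,\cdot}$ to $\|\nabla u\|_{1,\cdot}$. My plan is a two-step bootstrap: first construct the solution in $\mathcal{M}^{p,q}$ from part (i) using only the smallness of $\|\omega_0\|_{p_0,q_0},\|j_0\|_{p_0,q_0}$, then propagate the extra regularity $\omega_0,j_0\in\mathcal{M}^{1,\tilde{q_0}}$ by running a second fixed-point argument in the $\mathcal{M}^{1,q_1}$-space, where the bilinear terms are split using H\"older between the already-controlled $\mathcal{M}^{p,q}$ norm and the $\mathcal{M}^{1,q_1}$ norm being propagated. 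The intricate algebraic relations defining $E_2$ (involving $\tilde p\in(1,\min\{p,p'\})$, the auxiliary exponents $q_2,q_3$, and the balances $q_2=\tfrac{q_3}{p'}+\tfrac{q}{p}$, $\tfrac1{\tilde p}=\tfrac1{p'}+\tfrac1{3-q_3}$, etc.) are precisely the scaling identities making this H\"older split valid and the resulting Duhamel time integral convergent. Verifying these identities term by term and checking that the contraction constant can be made strictly less than $1$ under the same smallness assumption is the technical heart of the proof.
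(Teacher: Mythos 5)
Your proposal follows essentially the same route as the paper: a successive-approximation/fixed-point scheme for the Duhamel formulation $(2.1)$, with the linear part controlled by the heat-kernel Morrey estimates of Proposition 4, the bilinear terms by the Morrey H\"older, Riesz-potential and Biot--Savart inequalities, the time integrals by the beta-function conditions encoded in $E_1$ and $E_2$, and, for part (ii), exactly the paper's strategy of propagating the $\mathcal{M}^{1,q_1}$ norms by interpolating against the $\mathcal{M}^{p,q}$ bounds already obtained in part (i). The only presentational difference is that the paper runs a single iteration $(3.3)$ and estimates the additional $\mathcal{M}^{1,q_1}$-type norms of the same iterates (via Proposition 1 (vi) and Lemmas 2--3) rather than setting up a literal second contraction, which is immaterial.
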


\bigskip

In order to prove Theorem \ref{thm3.1}, we need to use the standard
successive approximation scheme:
$$\begin{array}{ll}
\left\{\aligned
&\omega^{(0)}(x,t)=G(\cdot,t)*\omega_0(x),\ \ \ j^{(0)}(x,t)=G(\cdot,t)*j_0(x),\\
&\omega^{(k+1)}(x,t)=\omega^{(0)}(x,t)+\displaystyle\int_0^t\int_{\mathbb{R}^3}G(x-y,t-s)\\
&\qquad\qquad\times\partial_{y_i}( {u}^{i,(k)}\omega^{(k)}-u^{(k)}\omega^{i,(k)}-b^{i,(k)}j^{(k)}+b^{(k)}j^{i,(k)})(y,s){\rm d}y{\rm d}s,\\
&j^{(k+1)}(x,t)=j^{(0)}(x,t)+\displaystyle\int_0^t\int_{\mathbb{R}^3}G(x-y,t-s)\\
&\qquad\qquad\times\nabla_y\times((u^{(k)}\cdot\nabla)b^{(k)}-(b^{(k)}\cdot\nabla)u^{(k)})(y,s){\rm d}y{\rm d}s,\\
&u^{(k)}(x,t)=K*\omega^{(k)}(x,t),\ \ b^{(k)}(x,t)=K*j^{(k)}(x,t).
\endaligned\right.
\end{array}\eqno(3.3)$$

We now prove Theorem \ref{thm3.1} by the following two parts:

\medskip

\quad\hspace{-20pt}{\bf 3.1. Proof for part (i) of Theorem \ref{thm3.1}}

\medskip

Let $(p,q,p_0,q_0)\in E_1$. There exist two real numbers $r,\,\theta$
such that
$$1\leq\theta<p,\ \ \frac{1}{\theta}=\frac{1}{r}+\frac{1}{p},\ \ \frac{1}{p}-\frac{1}{r}=\frac{1}{3-q}.$$
For convenience, we set
\begin{equation*}
\begin{array}{ll}
&W_{k,p,q}^{0}=\sup\limits_{t\in\mathbb{R}^{+}}t^{1-\frac{3-q}{2p}}\|\omega^{(k)}(\cdot,t)\|_{p,q},\ \ \ \ \ \ \ \bar{W}_{k,p,q}^{0}=\|\|\omega^{(k)}(\cdot,t)\|_{p,q}\|_{L_t^{\frac{2p}{2p-3+q}}(\mathbb{R}^{+})},\\
&W_{k,p,q}^{1}=\sup\limits_{t\in\mathbb{R}^{+}}t^{\frac{3}{2}-\frac{3-q}{2p}}\|\nabla \omega^{(k)}(\cdot,t)\|_{p,q},\ \ \ \ \bar{W}_{k,p,q}^{1}(u)=\|\|\nabla\omega^{(k)}(\cdot,t)\|_{p,q}\|_{L_t^{\frac{2p}{3p-3+q}}(\mathbb{R}^{+})},\\
&J_{k,p,q}^{0}=\sup\limits_{t\in\mathbb{R}^{+}}t^{1-\frac{3-q}{2p}}\|j^{(k)}(\cdot,t)\|_{p,q},\ \ \ \ \ \ \ \ \ \bar{J}_{k,p,q}^{0}=\|\|j^{(k)}(\cdot,t)\|_{p,q}\|_{L_t^{\frac{2p}{2p-3+q}}(\mathbb{R}^{+})},\\
&J_{k,p,q}^{1}=\sup\limits_{t\in\mathbb{R}^{+}}t^{\frac{3}{2}-\frac{3-q}{2p}}\|\nabla j^{(k)}(\cdot,t)\|_{p,q},\ \ \ \ \ \ \bar{J}_{k,p,q}^{1}(u)=\|\|\nabla j^{(k)}(\cdot,t)\|_{p,q}\|_{L_t^{\frac{2p}{3p-3+q}}(\mathbb{R}^{+})}.
\end{array}
\end{equation*}

The proof for part (i) of Theorem \ref{thm3.1} will be divided into
five steps:

\medskip

{\bf Step 1: Estimates for  the terms $W_{k,p,q}^{0}$, $J_{k,p,q}^{0}$,
$\bar{W}_{k,p,q}^{0}$ and $\bar{J}_{k,p,q}^{0}$.}

\medskip
By (3.3) and Proposition 4, there exists a constant $C_1>0$ such that
$$\begin{array}{ll}
&\|\omega^{(k+1)}(\cdot,t)\|_{p,q}
\leq\displaystyle\|G(\cdot,t)\ast \omega_0\|_{p,q}+\displaystyle C_1\int_0^t(t-s)^{-\frac{1}{2}-\frac{1}{2}(\frac{3-q}{\theta}-\frac{3-q}{p})}\\
&\qquad\qquad\qquad\qquad\times\|({u}^{i,(k)}\omega^{(k)}-u^{(k)}\omega^{i,(k)}-b^{i,(k)}j^{(k)}+b^{(k)}j^{i,(k)})(\cdot,s)\|_{\theta,q}{\rm d}s,
\end{array}\eqno(3.4)$$
and
$$\begin{array}{ll}
&\|j^{(k+1)}(\cdot,t)\|_{p,q}\leq\displaystyle\|G(\cdot,t)\ast j_0\|_{p,q}+C_1\int_0^t(t-s)^{-\frac{1}{2}-\frac{1}{2}(\frac{3-q}{\theta}-\frac{3-q}{p})}\\
&\qquad\qquad\qquad\qquad\times\|((u^{(k)}\cdot\nabla)b^{(k)}-(b^{(k)}\cdot\nabla)u^{(k)})(\cdot,s)\|_{\theta,q}{\rm d}s.
\end{array}\eqno(3.5)$$
Due to  part (i) of  Proposition 2 and Propositions  3-4,  there exists a constant $C_2>0$ such that
$$\begin{array}{ll}
&\quad\|({u}^{i,(k)}\omega^{(k)}-u^{(k)}\omega^{i,(k)}-b^{i,(k)}j^{(k)}+b^{(k)}j^{i,(k)})(\cdot,s)\|_{\theta,q}\vspace{1ex}\\
&\leq\|u^{(k)}(\cdot,s)\|_{r,q}\|\omega^{(k)}(\cdot,s)\|_{p,q}+\|b^{(k)}(\cdot,s)\|_{r,q}\|j^{(k)}(\cdot,s)\|_{p,q}\vspace{1ex}\\
&\leq C_2(\|\omega^{(k)}(\cdot,s)\|_{p,q}^2+\|j^{(k)}(\cdot,s)\|_{p,q}^2),
\end{array}\eqno(3.6)$$
$$\begin{array}{ll}
&\quad\|((u^{(k)}\cdot\nabla)b^{(k)}-(b^{(k)}\cdot\nabla)u^{(k)})(\cdot,s)\|_{\theta,q}\vspace{1ex}\\
&\leq\|\nabla b^{(k)}(\cdot,s)\|_{p,q}\|u^{(k)}(\cdot,s)\|_{r,q}+\|\nabla u^{(k)}(\cdot,s)\|_{p,q}\|b^{(k)}(\cdot,s)\|_{r,q}\vspace{1ex}\\
&\leq\|j^{(k)}(\cdot,s)\|_{p,q}\|u^{(k)}(\cdot,s)\|_{r,q}+\|\omega^{(k)}(\cdot,s)\|_{p,q}\|b^{(k)}(\cdot,s)\|_{r,q}\vspace{1ex}\\
&\leq C_2(\|\omega^{(k)}(\cdot,s)\|_{p,q}^2+\|j^{(k)}(\cdot,s)\|_{p,q}^2).
\end{array}\eqno(3.7)$$
Note that $\frac{1}{2}+\frac{1}{2}(\frac{3-q}{\theta}-\frac{3-q}{p})=
\frac{3-q}{2p}$. Hence, it follows from (3.4)-(3.7) that
$$\begin{array}{ll}
&\|\omega^{(k+1)}(\cdot,t)\|_{p,q}\leq\displaystyle\|G(\cdot,t)\ast \omega_0\|_{p,q}\vspace{1ex}\\
&\qquad\qquad\qquad\qquad\displaystyle+C_1C_2\int_0^t(t-s)^{-\frac{3-q}{2p}}(\|\omega^{(k)}(\cdot,s)\|_{p,q}^2+\|j^{(k)}(\cdot,s)\|_{p,q}^2){\rm d}s,
\end{array}\eqno(3.8)$$
and
$$\begin{array}{ll}
&\|j^{(k+1)}(\cdot,t)\|_{p,q}\leq\displaystyle\|G(\cdot,t)\ast j_0\|_{p,q}\vspace{1ex}\\
&\qquad\qquad\qquad\qquad\displaystyle+C_1C_2\int_0^t(t-s)^{-\frac{3-q}{2p}}(\|\omega^{(k)}(\cdot,s)\|_{p,q}^2+\|j^{(k)}(\cdot,s)\|_{p,q}^2){\rm d}s.
\end{array}\eqno(3.9)$$

Define the functions $\{u_k\}_{k\geq1}$ by the following
\begin{equation*}
\left\{\aligned
&u_1(x,t)=G(\cdot,t)*(|\omega_0|+|j_0|)(x),\vspace{1ex}\\
&u_k(x,t)=(|\omega^{(k)}|+|j^{(k)}|)(x,t)\ \ \ {\rm for\ all}\ k\geq2.
\endaligned\right.
\end{equation*}
Inequality (3.8) together with (3.9) yields that
$$\|u_{k+1}(\cdot,t)\|_{p,q}\leq 2\|G(\cdot,t)*(|\omega_0|+|j_0|)\|_{p,q}+ 4C_1C_2\int_0^t(t-s)^{-\frac{3-q}{2p}}\|u_k(\cdot,s)\|_{p,q}^2{\rm d}s.\eqno(3.10)$$
On the other hand, by Proposition 4, there exists a
constant $A_1>0$ such that
$$\|u_1(\cdot,t)\|_{p,q}\leq A_1 t^{-\frac{1}{2}(\frac{3-q_0}{p_0}-\frac{3-q}{p})}\|u_0\|_{p_0,q_0}\leq A_1 t^{\frac{3-q}{2p}-1}\|u_0\|_{p_0,q_0}\eqno(3.11)$$
for all $u_0\in\mathcal{M}^{p_0,q_0}(\mathbb{R}^3)$. By (3.10), (3.11)
and invoking Lemma 3, we can get
$$W_{k,p,q}^0+J_{k,p,q}^0+\bar{W}_{k,p,q}^0+\bar{J}_{k,p,q}^0
\leq 16A_1\||\omega_0|+|j_0|\|_{p_0,q_0}\leq 16A_1(\|\omega_0\|_{p_0,q_0}+\|j_0\|_{p_0,q_0})\eqno(3.12)$$
for all $k\geq1$, whenever
$$\|\omega_0\|_{p_0,q_0}+\|j_0\|_{p_0,q_0}\leq\frac{1}{32A_1C_1C_2}
\min\Big\{\frac{1}{\mathcal{C}(1-\frac{3-q}{2p},\frac{3-q}{p}-1)},1\Big\}=:G_1.\eqno(3.13)$$

{\bf Step 2: Estimates for  the terms $W_{k,p,q}^1$ and $J_{k,p,q}^1$.}

\medskip

By (3.3) and Proposition 4 again, there exists a constant $C_3>0$
such that
$$\begin{array}{ll}
&\quad\|\nabla\omega^{(k+1)}(\cdot,t)\|_{p,q}\\
&\leq\displaystyle\|\nabla G(\cdot,t)\ast\omega_0\|_{p,q}+C_3\int_{{t}/{2}}^t(t-s)^{-\frac{1}{2}-\frac{1}{2}(\frac{3-q}{\theta}-\frac{3-q}{p})}\\
&\quad\times\|((u^{(k)}\cdot\nabla)\omega^{(k)}-(\omega^{(k)}\cdot\nabla)u^{(k)}-(b^{(k)}\cdot\nabla)j^{(k)}+(j^{(k)}\cdot\nabla)b^{(k)})(\cdot,s)\|_{\theta,q}{\rm d}s\\
&\quad+\displaystyle C_3\int_0^{{t}/{2}}(t-s)^{-1-\frac{1}{2}(\frac{3-q}{\theta}-\frac{3-q}{p})}\\
&\quad\times\|(u^{i,(k)}\omega^{(k)}-u^{(k)}\omega^{i,(k)}-b^{i,(k)}j^{(k)}+b^{(k)}j^{i,(k)})(\cdot,s)\|_{\theta,q}{\rm d}s,
\end{array}\eqno(3.14)$$
$$\begin{array}{ll}
&\quad\|\nabla j^{(k+1)}(\cdot,t)\|_{p,q}\\
&\leq\displaystyle\|\nabla G(\cdot,t)\ast j_0\|_{p,q}+C_3\int_{{t}/{2}}^t(t-s)^{-\frac{1}{2}-\frac{1}{2}(\frac{3-q}{\theta}-\frac{3-q}{p})}\\
&\quad\times\|((u^{(k)}\cdot\nabla)j^{(k)}+\nabla u^{i,(k)}\times b^{(k)}_{x_i}-(b^{(k)}\cdot\nabla)\omega^{(k)}-\nabla b^{i,(k)}\times u^{(k)}_{x_i})(\cdot,s)\|_{\theta,q}{\rm d}s\\
&\quad+\displaystyle C_3\int_0^{{t}/{2}}(t-s)^{-1-\frac{1}{2}(\frac{3-q}{\theta}-\frac{3-q}{p})}
\|((u^{(k)}\cdot\nabla)b^{(k)}-(b^{(k)}\cdot\nabla)u^{(k)})(\cdot,s)\|_{\theta,q}{\rm d}s.
\end{array}\eqno(3.15)$$
By Propositions 2 (i) and 3, one finds that
$$\begin{array}{ll}
&\quad\|((u^{(k)}\cdot\nabla)\omega^{(k)}-(\omega^{(k)}\cdot\nabla)u^{(k)}-(b^{(k)}\cdot\nabla)j^{(k)}+(j^{(k)}\cdot\nabla)b^{(k)})(\cdot,s)\|_{\theta,q}\vspace{1ex}\\
&\leq\|u^{(k)}(\cdot,s)\|_{r,q}\|\nabla\omega^{(k)}(\cdot,s)\|_{p,q}+\|\omega^{(k)}(\cdot,s)\|_{p,q}\|\nabla u^{(k)}(\cdot,s)\|_{r,q}\vspace{1ex}\\
&\quad+\|b^{(k)}(\cdot,s)\|_{r,q}\|\nabla j^{(k)}(\cdot,s)\|_{p,q}+\|j^{(k)}(\cdot,s)\|_{p,q}\|\nabla b^{(k)}(\cdot,s)\|_{r,q}\vspace{1ex}\\
&\leq2C_2(\|\omega^{(k)}(\cdot,s)\|_{p,q}\|\nabla\omega^{(k)}(\cdot,s)\|_{p,q}+\|j^{(k)}(\cdot,s)\|_{p,q}\|\nabla j^{(k)}(\cdot,s)\|_{p,q}),
\end{array}\eqno(3.16)$$
$$\begin{array}{ll}
&\quad\|((u^{(k)}\cdot\nabla)j^{(k)}+\nabla u^{i,(k)}\times b^{(k)}_{x_i}-(b^{(k)}\cdot\nabla)\omega^{(k)}-\nabla b^{i,(k)}\times u^{(k)}_{x_i})(\cdot,s)\|_{\theta,q}\vspace{1ex}\\
&\leq\|u^{(k)}(\cdot,s)\|_{r,q}\|\nabla j^{(k)}(\cdot,s)\|_{p,q}+2\|\nabla u^{(k)}(\cdot,s)\|_{p,q}\|\nabla b^{(k)}(\cdot,s)\|_{r,q}\vspace{1ex}\\
&\quad+\|b^{(k)}\|_{r,q}\|\nabla\omega^{(k)}(\cdot,s)\|_{p,q}\vspace{1ex}\\
&\leq C_2(3\|\omega^{(k)}(\cdot,s)\|_{p,q}\|\nabla j^{(k)}(\cdot,s)\|_{p,q}+\|j^{(k)}(\cdot,s)\|_{p,q}\|\nabla\omega^{(k)}(\cdot,s)\|_{p,q}).
\end{array}\eqno(3.17)$$
Applying Proposition 4, there exists a constant $A_2>0$ such that
$$\|\nabla G(\cdot,t)*u_0\|_{p,q}\leq A_2t^{-\frac{1}{2}(\frac{3-q_0}{p_0}-\frac{3-q}{p}+1)}\|u_0\|_{p_0,q_0}=A_2 t^{\frac{3-q}{2p}-\frac{3}{2}}\|u_0\|_{p_0,q_0}.\eqno(3.18)$$
It now follows from (3.14)-(3.18), (3.6) and (3.7) that
$$\begin{array}{ll}
&\quad\|\nabla\omega^{(k+1)}(\cdot,t)\|_{p,q}\\
&\leq\displaystyle A_2 t^{\frac{3-q}{2p}-\frac{3}{2}}\|\omega_0\|_{p_0,q_0}
+\displaystyle2C_2C_3\int_{{t}/{2}}^t(t-s)^{-\frac{3-q}{2p}}(\|\omega^{(k)}(\cdot,s)\|_{p,q}\|\nabla\omega^{(k)}(\cdot,s)\|_{p,q}\vspace{1ex}\\
&\quad+\|j^{(k)}(\cdot,s)\|_{p,q}\|\nabla j^{(k)}(\cdot,s)\|_{p,q})ds\vspace{1ex}\\
&\quad+\displaystyle C_2C_3\int_0^{{t}/{2}}(t-s)^{-\frac{1}{2}-\frac{3-q}{2p}}(\|\omega^{(k)}(\cdot,s)\|_{p,q}^2+\|j^{(k)}(\cdot,s)\|_{p,q}^2){\rm d}s,
\end{array}\eqno(3.19)$$
$$\begin{array}{ll}
&\quad\|\nabla j^{(k+1)}(\cdot,t)\|_{p,q}\\
&\leq\displaystyle A_2 t^{\frac{3-q}{2p}-\frac{3}{2}}\|j_0\|_{p_0,q_0}+C_2C_3\displaystyle\int_{{t}/{2}}^t(t-s)^{-\frac{3-q}{2p}}(3\|\omega^{(k)}(\cdot,s)\|_{p,q}\|\nabla j^{(k)}(\cdot,s)\|_{p,q}\vspace{1ex}\\
&\quad+\|j^{(k)}(\cdot,s)\|_{p,q}\|\nabla\omega^{(k)}(\cdot,s)\|_{p,q}){\rm d}s\vspace{1ex}\\
&\quad+C_2C_3\displaystyle\int_0^{{t}/{2}}(t-s)^{-\frac{1}{2}-\frac{3-q}{2p}}(\|\omega^{(k)}(\cdot,s)\|_{p,q}^2+\|j^{(k)}(\cdot,s)\|_{p,q}^2){\rm d}s.
\end{array}\eqno(3.20)$$
Some direct computations may yield that
$$\begin{array}{ll}
&\quad\displaystyle\int_{{t}/{2}}^t(t-s)^{-\frac{3-q}{2p}}(\|\omega^{(k)}(\cdot,s)\|_{p,q}\|\nabla \omega^{(k)}(\cdot,s)\|_{p,q}+\|j^{(k)}(\cdot,s)\|_{p,q}\|\nabla j^{(k)}(\cdot,s)\|_{p,q}){\rm d}s\\
&\leq\displaystyle(W_{k,p,q}^0W_{k,p,q}^1+J_{k,p,q}^0J_{k,p,q}^1\int_{{t}/{2}}^t(t-s)^{-\frac{3-q}{2p}}s^{\frac{3-q}{p}-\frac{5}{2}}{\rm d}s\vspace{1ex}\\
&\leq\frac{2p}{2p+q-3}2^{\frac{3}{2}-\frac{3-q}{2p}}t^{\frac{3-q}{2p}-\frac{3}{2}}
(W_{k,p,q}^0W_{k,p,q}^1+J_{k,p,q}^0J_{k,p,q}^1),
\end{array}\eqno(3.21)$$
$$\begin{array}{ll}
&\quad\displaystyle\int_{{t}/{2}}^t(t-s)^{-\frac{3-q}{2p}}(3\|\omega^{(k)}(\cdot,s)\|_{p,q}\|\nabla j^{(k)}(\cdot,s)\|_{p,q}+\|j^{(k)}(\cdot,s)\|_{p,q}\|\nabla \omega^{(k)}(\cdot,s)\|_{p,q}){\rm d}s\vspace{1ex}\\
&\leq\frac{2p}{2p+q-3}2^{\frac{3}{2}-\frac{3-q}{2p}}t^{\frac{3-q}{2p}-\frac{3}{2}}
(3W_{k,p,q}^0J_{k,p,q}^1+J_{k,p,q}^0W_{k,p,q}^1),
\end{array}\eqno(3.22)$$
$$\begin{array}{ll}
&\quad\displaystyle\int_0^{{t}/{2}}(t-s)^{-\frac{1}{2}-\frac{3-q}{2p}}(\|\omega^{(k)}(\cdot,s)\|_{p,q}^2+\|j^{(k)}(\cdot,s)\|_{p,q}^2){\rm d}s\\
&\leq\displaystyle (W_{k,p,q}^0W_{k,p,q}^0+J_{k,p,q}^0J_{k,p,q}^0)
\int_0^{t/2}(t-s)^{-\frac{1}{2}-\frac{3-q}{2p}}s^{\frac{3-q}{p}-2}{\rm d}s\vspace{1ex}\\
&\leq\frac{p}{3-q-p}2^{\frac{3}{2}-\frac{3-q}{2p}}t^{\frac{3-q}{2p}-\frac{3}{2}}
(W_{k,p,q}^0W_{k,p,q}^0+J_{k,p,q}^0J_{k,p,q}^0).
\end{array}\eqno(3.23)$$
Inequality (3.19) together with (3.20)-(3.23) implies that there 
exists a constant $C_4>0$ such that
$$W_{k+1,p,q}^1\leq A_2\|\omega_0\|_{p_0,q_0}+C_4(W_{k,p,q}^0W_{k,p,q}^1+J_{k,p,q}^0J_{k,p,q}^1)+C_4(W_{k,p,q}^0W_{k,p,q}^0+J_{k,p,q}^0J_{k,p,q}^0),\eqno(3.24)$$
$$J_{k+1,p,q}^1\leq A_2\|j_0\|_{p_0,q_0}+C_4(3W_{k,p,q}^0J_{k,p,q}^1+J_{k,p,q}^0W_{k,p,q}^1)+C_4(W_{k,p,q}^0W_{k,p,q}^0+J_{k,p,q}^0J_{k,p,q}^0).\eqno(3.25)$$
Therefore, by (3.12), (3.24) and (3.25), we have that, there exists 
$C_5>0$ such that
$$W_{k+1,p,q}^1+J_{k,p,q}^1\leq C_5(\|\omega_0\|_{p_0,q_0}+\|j_0\|_{p_0,q_0})(1+W_{k,p,q}^1+J_{k,p,q}^1)\ \ {\rm for\ all}\ k\geq1\eqno(3.26)$$
whenever
$$\|\omega_0\|_{p_0,q_0}+\|j_0\|_{p_0,q_0}\leq\min\{G_1,1\}.\eqno(3.27)$$
By (3.18), it holds that
$$W_{1,p,q}^1+J_{1,p,q}^1\leq A_2(\|\omega_0\|_{p_0,q_0}+\|j_0\|_{p_0,q_0}).\eqno(3.28)$$
Combining (3.26) with (3.28) implies that
$$\begin{array}{ll}
&\quad W_{k,p,q}^1+J_{k,p,q}^1\vspace{1ex}\\
&\leq\displaystyle\Big(1+\frac{A_2}{C_5}\Big)
\frac{C_5(\|\omega_0\|_{p_0,q_0}+\|j_0\|_{p_0,q_0})}{1-C_5(\|\omega_0\|_{p_0,q_0}+\|j_0\|_{p_0,q_0})}
(1-(C_5(\|\omega_0\|_{p_0,q_0}+\|j_0\|_{p_0,q_0}))^k)\vspace{1ex}\\
&\leq2(A_2+C_5)(\|\omega_0\|_{p_0,q_0}+\|j_0\|_{p_0,q_0})\ \ \ {\rm for\ all}\ k\geq1
\end{array}\eqno(3.29)$$
whenever
$$\|\omega_0\|_{p_0,q_0}+\|j_0\|_{p_0,q_0}\leq\min\{G_1,1,(2C_5)^{-1}\}.\eqno(3.30)$$

{\bf Step 3: Estimates for  the terms $\bar{W}_{k,p,q}^1$ and $\bar{J}_{k,p,q}^1$.}

\medskip

By (3.3) and Proposition 4, there exists a constant $C_6>0$ such that
$$\begin{array}{ll}
&\quad\|\nabla\omega^{(k+1)}(\cdot,t)\|_{p,q}\\
&\leq\displaystyle\|\nabla G(\cdot,t)\ast\omega_0\|_{p,q}+C_6\int_{0}^t(t-s)^{-\frac{1}{2}-\frac{1}{2}(\frac{3-q}{\theta}-\frac{3-q}{p})}\\
&\quad\times\|((u^{(k)}\cdot\nabla)\omega^{(k)}-(\omega^{(k)}\cdot\nabla)u^{(k)}
-(b^{(k)}\cdot\nabla)j^{(k)}+(j^{(k)}\cdot\nabla)b^{(k)})(\cdot,s)\|_{\theta,q}{\rm d}s,
\end{array}\eqno(3.31)$$
$$\begin{array}{ll}
&\quad\|\nabla j^{(k+1)}(\cdot,t)\|_{p,q}\\
&\leq\displaystyle\|\nabla G(\cdot,t)\ast j_0\|_{p,q}+C_6\int_{0}^t(t-s)^{-\frac{1}{2}-\frac{1}{2}(\frac{3-q}{\theta}-\frac{3-q}{p})}\\
&\quad\times\|((u^{(k)}\cdot\nabla)j^{(k)}+\nabla u^{i,(k)}\times b^{(k)}_{x_i}-(b^{(k)}\cdot\nabla)\omega^{(k)}-\nabla b^{i,(k)}\times u^{(k)}_{x_i})(\cdot,s)\|_{\theta,q}{\rm d}s.
\end{array}\eqno(3.32)$$
It follows from (3.16), (3.17), (3.31) and (3.32) that
$$\begin{array}{ll}
&\quad\|\nabla\omega^{(k+1)}(\cdot,t)\|_{p,q}\\
&\leq\displaystyle\|\nabla G(\cdot,t)*\omega_0\|_{p,q}+2C_2C_6\int_{0}^t(t-s)^{-\frac{3-q}{2p}}\\
&\quad\times(\|\omega^{(k)}(\cdot,s)\|_{p,q}\|\nabla\omega^{(k)}(\cdot,s)\|_{p,q}+\|j^{(k)}(\cdot,s)\|_{p,q}\|\nabla j^{(k)}(\cdot,s)\|_{p,q}){\rm d}s,
\end{array}\eqno(3.33)$$
$$\begin{array}{ll}
&\quad\|\nabla j^{(k+1)}(\cdot,t)\|_{p,q}\\
&\leq\displaystyle\|\nabla G(\cdot,t)*j_0\|_{p,q}+C_2C_6\int_{0}^t(t-s)^{-\frac{3-q}{2p}}\\
&\quad\times(3\|\omega^{(k)}(\cdot,s)\|_{p,q}\|\nabla j^{(k)}(\cdot,s)\|_{p,q}+\|j^{(k)}(\cdot,s)\|_{p,q}\|\nabla\omega^{(k)}(\cdot,s)\|_{p,q}){\rm d}s.
\end{array}\eqno(3.34)$$
We set
$$a=\Big(\frac{3}{2}-\frac{3-q}{2p}\Big)^{-1},\ \ b=\Big(\frac{5}{2}-\frac{3-q}{p}\Big)^{-1}.$$
Note that $a>1,\,b>1$ and
$$\frac{1}{a}+1=\frac{1}{b}+\frac{3-q}{2p},\ \ \frac{1}{b}=\Big(\frac{3}{2}-\frac{3-q}{2p}\Big)+\Big(1-\frac{3-q}{2p}\Big).$$
By (3.33), (3.34), Proposition 4, Lemma 2, H\"{o}lder's inequality
and the following Young's inequality
$$\|f*g\|_{L^a(\mathbb{R}^{+})}\leq\|f\|_{L^{\frac{2p}{3-q},\infty}(\mathbb{R}^{+})}\|g\|_{L^b(\mathbb{R}^{+})},$$
it holds that
$$\begin{array}{ll}
&\bar{W}_{k+1,p,q}^1
\leq A_3\|\omega_0\|_{p_0,q_0}+2C_2C_6\|t^{-\frac{3-q}{2p}}\|_{L^{\frac{2p}{3-q},\infty}(\mathbb{R}^{+})}\vspace{1ex}\\
&\qquad\qquad\quad\times\|(\|\omega^{(k)}(\cdot,t)\|_{p,q}\|\nabla\omega^{(k)}(\cdot,t)\|_{p,q}\vspace{1ex}\\
&\qquad\qquad+\|j^{(k)}(\cdot,t)\|_{p,q}\|\nabla j^{(k)}(\cdot,t)\|_{p,q})\|_{L^b(\mathbb{R}^{+})}\vspace{1ex}\\
&\qquad\qquad\leq A_3\|\omega_0\|_{p_0,q_0}+2C_2C_6(\bar{W}_{k,p,q}^0\bar{W}_{k,p,q}^1+\bar{J}_{k,p,q}^0\bar{J}_{k,p,q}^1);
\end{array}\eqno(3.35)$$
$$\begin{array}{ll}
&\bar{J}_{k+1,p,q}^1\leq\displaystyle A_3\|j_0\|_{p_0,q_0}+3C_2C_6\|t^{-\frac{3-q}{2p}}\|_{L^{\frac{2p}{3-q},\infty}(\mathbb{R}^{+})}\vspace{1ex}\\
&\qquad\qquad\times\|(\|\omega^{(k)}(\cdot,t)\|_{p,q}\|\nabla j^{(k)}(\cdot,t)\|_{p,q}+\|j^{(k)}(\cdot,t)\|_{p,q}\|\nabla\omega^{(k)}(\cdot,t)\|_{p,q})\|_{L^b(\mathbb{R}^{+})}\vspace{1ex}\\
&\qquad\quad\leq A_3\|j_0\|_{p_0,q_0}+3C_2C_6(3\bar{W}_{k,p,q}^0\bar{J}_{k,p,q}^1+\bar{J}_{k,p,q}^0\bar{W}_{k,p,q}^1).
\end{array}\eqno(3.36)$$
It follow from (3.12), (3.35) and (3.36) that there exists $C_7>0$
such that
$$\bar{W}_{k+1,p,q}^1+\bar{J}_{k+1,p,q}^1\leq C_7(\|\omega_0\|_{p_0,q_0}+\|j_0\|_{p_0,q_0})(1+\bar{W}_{k,p,q}^1+\bar{J}_{k,p,q}^1)\ \ \ {\rm for\ all}\ k\geq1\eqno(3.37)$$
whenever the condition (3.27) holds.

Applying Proposition 4 and Lemma 2, there exists a constant $A_4>0$
such that
$$\bar{W}_{1,p,q}^1+\bar{J}_{1,p,q}^1\leq A_4(\|\omega_0\|_{p_0,q_0}+\|j_0\|_{p_0,q_0}).\eqno(3.38)$$
Combining (4.38) with (4.37) implies that
$$\bar{W}_{k,p,q}^1+\bar{J}_{k,p,q}^1\leq2(A_4+C_7)(\|\omega_0\|_{p_0,q_0}+\|j_0\|_{p_0,q_0})\ \ \ {\rm for\ all}\ k\geq1\eqno(3.39)$$
whenever
$$\|\omega_0\|_{p_0,q_0}+\|j_0\|_{p_0,q_0}\leq\min\{G_1,1,(2C_7)^{-1}\}.\eqno(3.40)$$

{\bf Step 4: Conclusions.}

\medskip

Define the following mappings:
\begin{equation*}
\mathcal{T}_{1}\omega^{(k)}=\omega^{(k+1)},\ \ \ \mathcal{T}_2 j^{(k)}=j^{(k+1)}.
\end{equation*}
Let $\omega_0,\,j_0$ satisfy
$$\|\omega_0\|_{p_0,q_0}+\|j_0\|_{p_0,q_0}\leq G_1.\eqno(3.41)$$
We set $R=16A_1(\|\omega_0\|_{p_0,q_0}+\|j_0\|_{p_0,q_0})$ and
\begin{equation*}
B_R=\Big\{f\in L^{\frac{2p}{2p-3+q}}(\mathbb{R}^{+};\mathcal{M}^{p,q}(\mathbb{R}^3)):\|\|f\|_{p,q}\|_{L^{\frac{2p}{2p-3+q}}(\mathbb{R}^{+})}\leq R\Big\}.
\end{equation*}
By {\bf Step 1} we see that $\{\omega^{(k)}\}\subset B_R$ and
$\{j^{(k)}\}\subset B_R$. Moreover,
$L^{\frac{2p}{2p-3+q}}(\mathbb{R}^{+};\mathcal{M}^{p,q}(\mathbb{R}^3))$
is a Banach space. Applying Banach's fixed point Theorem, there exits
a global mild solution $(\omega,j)$ on $\mathbb{R}^{+}$ of (1.2) such
that the estimates (3.1) hold under the condition (3.41) holds. Moreover,
it holds that
\begin{equation*}
\|\|\omega^{(k)}-\omega\|_{p,q}\|_{L_t^{\frac{2p}{2p-3+q}}(\mathbb{R}^{+})}+
\|\|j^{(k)}-j\|_{p,q}\|_{L_t^{\frac{2p}{2p-3+q}}(\mathbb{R}^{+})}\rightarrow0\ \ {\rm as}\ k\rightarrow\infty.
\end{equation*}

{\bf Step 5: Proof of the uniqueness.}

\medskip

Assume that there exist two global mild solution $(\omega,j)$ and
$(\bar{\omega},\bar{j})$ on $\mathbb{R}^{+}$ of (1.2) satisfy the
estimates (3.1). From {\bf Step 4} we also see that
$$\|\|\omega^{(k)}-\omega\|_{p,q}\|_{L_t^{\frac{2p}{2p-3+q}}(\mathbb{R}^{+})}+
\|\|j^{(k)}-j\|_{p,q}\|_{L_t^{\frac{2p}{2p-3+q}}(\mathbb{R}^{+})}\rightarrow0\ \ {\rm as}\ k\rightarrow\infty,\eqno(3.42)$$
$$\|\|\omega^{(k)}-\bar{\omega}\|_{p,q}\|_{L_t^{\frac{2p}{2p-3+q}}(\mathbb{R}^{+})}+
\|\|j^{(k)}-\bar{j}\|_{p,q}\|_{L_t^{\frac{2p}{2p-3+q}}(\mathbb{R}^{+})}\rightarrow0\ \ {\rm as}\ k\rightarrow\infty,\eqno(3.43)$$
$$\bar{W}_{k,p,q}^0+\bar{J}_{k,p,q}^0\leq 16A_1(\|\omega_0\|_{p_0,q_0}+\|j_0\|_{p_0,q_0})\ \ \ {\rm for\ all}\  k\geq1\eqno(3.44)$$
whenever (3.13) holds.

For convenience, given a function $f:\mathbb{R}^3\times(0,\infty)$,
we set
$$I_{p,q}(f):=\|\|f(\cdot,t)\|_{p,q}\|_{L_t^{\frac{2p}{2p-3+q}}(\mathbb{R}^{+})}.$$
Then (3.42)-(3.44) together with the Minkowski's inequality imply that
$$I_{p,q}(\omega)+I_{p,q}(j)+I_{p,q}(\bar{\omega})+I_{p,q}(\bar{j})\leq 20A_1(\|\omega_0\|_{p_0,q_0}+\|j_0\|_{p_0,q_0}),\eqno(3.45)$$
whenever (3.13) holds.

For convenience, we set
\begin{equation*}
\begin{array}{ll}
&u=K*\omega,\ \ \bar{u}=K*\bar{\omega},\ \ b=K*j,\ \ \bar{b}=K*\bar{j},\vspace{1ex}\\
&\omega(0,x)=\bar{\omega}(0,x)=\omega_0(x),\ \ j(0,x)=\bar{j}(0,x)=j_0(x),\vspace{1ex}\\
&[\omega]=\omega-\bar{\omega},\ \ [j]=j-\bar{j},\ \ [u]=u-\bar{u},\ \ \ [b]=b-\bar{b}.
\end{array}
\end{equation*}
Clearly,
\begin{equation*}
[u]=K*[\omega],\ \ \ [b]=K*[j].
\end{equation*}
By (2.1), it holds that
$$\begin{array}{ll}
&[\omega](x,t)=\displaystyle\int_{0}^t\int_{\mathbb{R}^3}G(x-y,t-s)\partial_{y_i}([u]^i\omega-[u]\omega^i-[b]^ij+[b]j^i)(y,s){\rm d}y{\rm d}s\\
&\qquad\qquad+\displaystyle\int_{0}^t\int_{\mathbb{R}^3}G(x-y,t-s)\partial_{y_i}(\bar{u}^i[\omega]
-\bar{u}[\omega]^i-\bar{b}^i[j]+\bar{b}[j]^i)(y,s){\rm d}y{\rm d}s,
\end{array}\eqno(3.46)$$
$$\begin{array}{ll}
&[j](x,t)=\displaystyle\int_{0}^t\int_{\mathbb{R}^3}G(x-y,t-s)\nabla_y\times(([u]\cdot\nabla)[b]-([b]\cdot\nabla)[u])(y,s){\rm d}y{\rm d}s\\
&\qquad\qquad+\displaystyle\int_{0}^t\int_{\mathbb{R}^3}G(x-y,t-s)\nabla_y\times(([u]\cdot\nabla)\bar{b}-(\bar{b}\cdot\nabla)[u])(y,s){\rm d}y{\rm d}s\\
&\qquad\qquad+\displaystyle\int_{0}^t\int_{\mathbb{R}^3}G(x-y,t-s)\nabla_y\times((\bar{u}\cdot\nabla)[b]-([b]\cdot\nabla)\bar{u})(y,s){\rm d}y{\rm d}s.
\end{array}\eqno(3.47)$$
By Proposition 4, (3.46) and (3.47), there exists a constant $C_8>0$
such that
$$\begin{array}{ll}
&\|[\omega]\|_{p,q}\leq\displaystyle C_8\int_{0}^t(t-s)^{-\frac{3-q}{2p}}\|([u]^i\omega-[u]\omega^i-[b]^ij+[b]j^i\vspace{1ex}\\
&\qquad\qquad\quad+\bar{u}^i[\omega]-\bar{u}[\omega]^i-\bar{b}^i[j]+\bar{b}[j]^i)(\cdot,s)\|_{\theta,q}{\rm d}s,
\end{array}\eqno(3.48)$$
$$\begin{array}{ll}
&\|[j]\|_{p,q}\leq\displaystyle C_8\int_0^t(t-s)^{-\frac{3-q}{2p}}\|\nabla_y\times(([u]\cdot\nabla)[b]-([b]\cdot\nabla)[u])(\cdot,s)\vspace{1ex}\\
&\qquad\qquad+\nabla_y\times(([u]\cdot\nabla)\bar{b}-(\bar{b}\cdot\nabla)[u])(\cdot,s)\vspace{1ex}\\
&\qquad\qquad+\nabla_y\times((\bar{u}\cdot\nabla)[b]-([b]\cdot\nabla)\bar{u})(\cdot,s)\|_{\theta,q}{\rm d}s.
\end{array}\eqno(3.49)$$
By Propositions 2 (i) and 3, there exists a constant $C_9>0$ such that
$$\begin{array}{ll}
&\quad\displaystyle\|([u]^i\omega-[u]\omega^i-[b]^ij+[b]j^i
+\bar{u}^i[\omega]-\bar{u}[\omega]^i-\bar{b}^i[j]+\bar{b}[j]^i)(\cdot,s)\|_{\theta,q}\vspace{1ex}\\
&\leq C_9(\|[\omega]\|_{p,q}\|\omega\|_{p,q}+\|[j]\|_{p,q}\|j\|_{p,q}+\|\bar{\omega}\|_{p,q}\|[\omega]\|_{p,q}+\|\bar{j}\|_{p,q}\|[j]\|_{p,q}),
\end{array}\eqno(3.50)$$
$$\begin{array}{ll}
&\quad\displaystyle \|\nabla_y\times(([u]\cdot\nabla)[b]-([b]\cdot\nabla)[u])(\cdot,s)+\nabla_y\times(([u]\cdot\nabla)\bar{b}-(\bar{b}\cdot\nabla)[u])(\cdot,s)\vspace{1ex}\\
&\quad+\nabla_y\times((\bar{u}\cdot\nabla)[b]-([b]\cdot\nabla)\bar{u})(\cdot,s)\|_{\theta,q}\vspace{1ex}\\
&\leq C_9(\|[\omega]\|_{p,q}\|[j]\|_{p,q}+\|[\omega]\|_{p,q}\|\bar{j}\|_{p,q}+\|\bar{\omega}\|_{p,q}\|[j]\|_{p,q}).
\end{array}\eqno(3.51)$$
It follows from (3.49)-(3.51) that
$$\begin{array}{ll}
&\|[\omega]\|_{p,q}\leq\displaystyle C_8C_9\int_{0}^t(t-s)^{-\frac{3-q}{2p}}(\|[\omega]\|_{p,q}\|\omega\|_{p,q}+\|[j]\|_{p,q}\|j\|_{p,q}\vspace{1ex}\\
&\qquad\qquad+\|\bar{\omega}\|_{p,q}\|[\omega]\|_{p,q}+\|\bar{j}\|_{p,q}\|[j]\|_{p,q}){\rm d}s,
\end{array}\eqno(3.52)$$
$$\|[j]\|_{p,q}\leq\displaystyle C_8C_9\int_0^t(t-s)^{-\frac{3-q}{2p}}(\|[\omega]\|_{p,q}\|[j]\|_{p,q}
+\|[\omega]\|_{p,q}\|\bar{j}\|_{p,q}+\|\bar{\omega}\|_{p,q}\|[j]\|_{p,q}){\rm d}s.\eqno(3.53)$$
By the H\"{o}lder's inequality and the following Young's inequality
$$\|f*g\|_{L^a(\mathbb{R}^{+})}\leq\|f\|_{L^{\frac{2p}{3-q},\infty}(\mathbb{R}^{+})}\|g\|_{L^b(\mathbb{R}^{+})}$$
with $a=\frac{2p}{2p-3+q}$ and $b=\frac{p}{2p-3+q}$, we get from
(3.52)-(3.53) that
$$\begin{array}{ll}
&I_{p,q}([\omega])\leq\displaystyle C_8C_9\|t^{-\frac{3-q}{2p}}\|_{L^{\frac{2p}{3-q},\infty}(\mathbb{R}^{+})}\|\|[\omega]\|_{p,q}\|\omega\|_{p,q}+\|[j]\|_{p,q}\|j\|_{p,q}\vspace{1ex}\\
&\qquad\qquad\quad+\|\bar{\omega}\|_{p,q}\|[\omega]\|_{p,q}+\|\bar{j}\|_{p,q}\|[j]\|_{p,q})\|_{L^{\frac{2p-3+q}{p}}(\mathbb{R}^{+})}\vspace{1ex}\\
&\qquad\qquad\leq C_8C_9(I_{p,q}([\omega])I_{p,q}(\omega)+I_{p,q}([j])I_{p,q}(j)\vspace{2ex}\\
&\qquad\qquad\quad+I_{p,q}(\bar{\omega})I_{p,q}([\omega])+I_{p,q}(\bar{j})I_{p,q}([j])),
\end{array}\eqno(3.54)$$
$$I_{p,q}([j])\leq\displaystyle C_8C_9(I_{p,q}([\omega])(I_{p,q}(j)+I_{p,q}(\bar{j}))
+I_{p,q}([\omega])I_{p,q}(\bar{j})+I_{p,q}(\bar{\omega})I_{p,q}([j])).\eqno(3.55)$$
Hence, (3.53)-(3.55) and (3.45) yield that
$$I_{p,q}([\omega])+I_{p,q}([j])\leq C_{10}(\|\omega_0\|_{p_0,q_0}+\|j_0\|_{p_0,q_0})(I_{p,q}([\omega])+I_{p,q}([j]))\eqno(3.56)$$
whenever (3.13) holds.

Letting
$$\|\omega_0\|_{p_0,q_0}+\|j_0\|_{p_0,q_0}<\min\{G_1,(2C_9)^{-1}\},$$
inequality (3.56) gives $I_{p,q}([\omega])+I_{p,q}([j])=0$, which 
implies that $[\omega]=0$ and $[j]=0$ for almost every 
$(x,t)\in\mathbb{R}^3\times\mathbb{R}^{+}$. This proves the uniqueness. $\hfill\Box$

\medskip

\quad\hspace{-20pt}{\bf 3.2. Proof for part (ii) of Theorem \ref{thm3.1}}

\medskip

Let $(p,q,p_0,q_0,\tilde{q_0},q_1)\in E_2$. Then there exist
$q_2,\,q_3\in[0,3)$, $\tilde{p}\in(1,\min\{p,p'\})$ and $\theta\in(0,1)$
such that
\begin{equation*}
\begin{array}{ll}
&0\leq q_1-\tilde{q_0}<1,\ 0\leq q_1-q_2<1,\vspace{1ex}\\
&q_2=\frac{q_3}{p'}+\frac{q}{p},\ \ \frac{1}{\tilde{p}}=\frac{1}{p'}+\frac{1}{3-q_3},\ \frac{1}{\tilde{p}}=\theta+\frac{1-\theta}{p},\vspace{1ex}\\
&\frac{q_3}{\tilde{p}}=q_1\theta+\frac{q}{p}(1-\theta),\frac{q_2-\tilde{q_0}+1}{2}=\frac{q_1-\tilde{q_0}}{2}\theta+\frac{2p-3+q}{2p}(2-\theta).
\end{array}
\end{equation*}
For convenience, we set
\begin{equation*}
\begin{array}{ll}
&W_{k,1,q_1}^0=\sup\limits_{t\in\mathbb{R}^{+}}t^{\frac{q_1-\tilde{q_0}}{2}}\|\omega^{(k)}(\cdot,t)\|_{1,q_1},
\ \ \ \ \ \ \ \ \ \ \bar{W}_{k,1,q_1}^0=\|\|\omega^{(k)}(\cdot,t)\|_{1,q_1}\|_{L_t^{\frac{2}{q_1-\tilde{q_0}}}(\mathbb{R}^{+})},\\
&W_{k,1,q_1}^1=\sup\limits_{t\in\mathbb{R}^{+}}t^{\frac{1+q_1-\tilde{q_0}}{2}}\|\nabla \omega^{(k)}(\cdot,t)\|_{1,q_1},
\ \ \ \ \ \bar{W}_{k,1,q_1}^1=\|\|\nabla \omega^{(k)}(\cdot,t)\|_{1,q_1}\|_{L_t^{\frac{2}{1+q_1-\tilde{q_0}}}(\mathbb{R}^{+})},\\
&J_{k,1,q_1}^0=\sup\limits_{t\in\mathbb{R}^{+}}t^{\frac{q_1-\tilde{q_0}}{2}}\|j^{(k)}(\cdot,t)\|_{1,q_1},
\ \ \ \ \ \ \ \ \ \ \ \ \ \bar{J}_{k,1,q_1}^0=\|\|j^{(k)}(\cdot,t)\|_{1,q_1}\|_{L_t^{\frac{2}{q_1-\tilde{q_0}}}(\mathbb{R}^{+})},\\
&J_{k,1,q_1}^1=\sup\limits_{t\in\mathbb{R}^{+}}t^{\frac{1+q_1-\tilde{q_0}}{2}}\|\nabla j^{(k)}(\cdot,t)\|_{1,q_1},
\ \ \ \ \ \ \ \bar{J}_{k,1,q_1}^1=\|\|\nabla j^{(k)}(\cdot,t)\|_{1,q_1}\|_{L_t^{\frac{2}{1+q_1-\tilde{q_0}}}(\mathbb{R}^{+})}.
\end{array}
\end{equation*}

{\bf Step 1: Estimates for  the terms $W_{k,1,q_1}^0$, $J_{k,1,q_1}^0$}.

\medskip

By (3.3) and Proposition 4, one finds that
$$\begin{array}{ll}
&\|\omega^{(k+1)}(\cdot,t)\|_{1,q_1}\leq\displaystyle\|G(\cdot,t)\ast \omega_0\|_{1,q_1}+B_1\int_0^t(t-s)^{-\frac{1}{2}-\frac{q_1-q_2}{2}}\\
&\qquad\qquad\qquad\qquad\times\|(u^{i,(k)}\omega^{(k)}-u^{(k)}\omega^{i,(k)}-b^{i,(k)}j^{(k)}+b^{(k)}j^{i,(k)})(\cdot,s)\|_{1,q_2}{\rm d}s,
\end{array}\eqno(3.57)$$

$$\begin{array}{ll}
&\|j^{(k+1)}(\cdot,t)\|_{1,q_1}\leq\|G(\cdot,t)\ast j_0\|_{1,q_1}\vspace{1ex}\\
&\qquad\qquad\qquad\qquad+\displaystyle B_1\int_0^t(t-s)^{-\frac{1}{2}-\frac{q_1-q_2}{2}}\|((u^{(k)}\cdot\nabla)b^{(k)}-(b^{(k)}\cdot\nabla)u^{(k)})(\cdot,s)\|_{1,q_2}{\rm d}s.
\end{array}\eqno(3.58)$$
Note that $q_2=\frac{q_3}{p'}+\frac{q}{p}$. By Proposition 2 (i),
one has
$$\begin{array}{ll}
&\quad\|(u^{i,(k)}\omega^{(k)}-u^{(k)}\omega^{i,(k)}-b^{i,(k)}j^{(k)}+b^{(k)}j^{i,(k)})(\cdot,s)\|_{1,q_2}\vspace{1ex}\\
&\leq2\|u^{(k)}(\cdot,s)\|_{p',q_3}\|\omega^{(k)}(\cdot,s)\|_{p,q}+2\|b^{(k)}(\cdot,s)\|_{p',q_3}\|j^{(k)}(\cdot,s)\|_{p,q},
\end{array}\eqno(3.59)$$
$$\begin{array}{ll}
&\quad\|((u^{(k)}\cdot\nabla)b^{(k)}-(b^{(k)}\cdot\nabla)u^{(k)})(\cdot,s)\|_{1,q_2}\vspace{1ex}\\
&\leq\|u^{(k)}(\cdot,s)\|_{p',q_3}\|j^{(k)}(\cdot,s)\|_{p,q}+\|b^{(k)}(\cdot,s)\|_{p',q_3}\|\omega^{(k)}(\cdot,s)\|_{p,q}.
\end{array}\eqno(3.60)$$
Note that $\tilde{p}<p'$ and $\frac{1}{\tilde{p}}-\frac{1}{p'}=\frac{1}{3-q_3}$.
This together with Proposition 3 implies that
$$\|u^{(k)}(\cdot,s)\|_{p',q_3}\leq B_2\|\omega^{(k)}(\cdot,s)\|_{\tilde{p},1},\eqno(3.61)$$
$$\|b^{(k)}(\cdot,s)\|_{p',q_3}\leq B_2\|j^{(k)}(\cdot,s)\|_{\tilde{p},1}.\eqno(3.62)$$
Since
$$1<\tilde{p}<\min\{p,p'\},\ \ \frac{1}{\tilde{p}}=\theta+\frac{1-\theta}{p},\ \ \frac{q_3}{\tilde{p}}=q_1\theta+\frac{q}{p}(1-\theta),$$
then by invoking (vi) of Proposition 1, we can get
$$\|\omega^{(k)}(\cdot,s)\|_{\tilde{p},q_3}\leq B_3\|\omega^{(k)}(\cdot,s)\|_{1,q_1}^\theta\|\omega^{(k)}(\cdot,s)\|_{p,q}^{1-\theta},\eqno(3.63)$$
$$\|j^{(k)}(\cdot,s)\|_{\tilde{p},q_3}\leq B_3\|j^{(k)}(\cdot,s)\|_{1,q_1}^\theta\|j^{(k)}(\cdot,s)\|_{p,q}^{1-\theta}.\eqno(3.64)$$
Combing (3.59) with (3.61)-(3.64) implies that
$$\begin{array}{ll}
&\quad\|(u^{i,(k)}\omega^{(k)}-u^{(k)}\omega^{i,(k)}-b^{i,(k)}j^{(k)}+b^{(k)}j^{i,(k)})(\cdot,s)\|_{1,q_2}\vspace{1ex}\\
&\leq2B_2\|\omega^{(k)}(\cdot,s)\|_{\tilde{p},q_3}\|\omega^{(k)}(\cdot,s)\|_{p,q}+2B_2\|j^{(k)}(\cdot,s)\|_{\tilde{p},q_3}\|j^{(k)}(\cdot,s)\|_{p,q}\vspace{1ex}\\
&\leq2B_2B_3(\|\omega^{(k)}(\cdot,s)\|_{1,q_1}^\theta\|\omega^{(k)}(\cdot,s)\|_{p,q}^{2-\theta}+\|j^{(k)}(\cdot,s)\|_{1,q_1}^\theta\|j^{(k)}(\cdot,s)\|_{p,q}^{2-\theta}).
\end{array}\eqno(3.65)$$
It follows from (3.60)-(3.64) that
$$\begin{array}{ll}
&\quad\|((u^{(k)}\cdot\nabla)b^{(k)}-(b^{(k)}\cdot\nabla)u^{(k)})(\cdot,s)\|_{1,q_2}\vspace{1ex}\\
&\leq B_2\|\omega^{(k)}(\cdot,s)\|_{\tilde{p},q_3}\|j^{(k)}(\cdot,s)\|_{p,q}+B_2\|j^{(k)}(\cdot,s)\|_{\tilde{p},q_3}\|\omega^{(k)}(\cdot,s)\|_{p,q}\vspace{1ex}\\
&\leq B_2B_3(\|\omega^{(k)}(\cdot,s)\|_{1,q_1}^\theta\|\omega^{(k)}(\cdot,s)\|_{p,q}^{1-\theta}\|j^{(k)}(\cdot,s)\|_{p,q}\vspace{1ex}\\
&\quad+\|j^{(k)}(\cdot,s)\|_{1,q_1}^\theta\|j^{(k)}(\cdot,s)\|_{p,q}^{1-\theta}\|\omega^{(k)}(\cdot,s)\|_{p,q}).
\end{array}\eqno(3.66)$$
Inequalities (3.57) and (3.65) imply that
$$\begin{array}{ll}
&\|\omega^{(k+1)}(\cdot,t)\|_{1,q_1}
\leq\|G(\cdot,t)\ast \omega_0\|_{1,q_1}\vspace{1ex}\\
&\qquad\qquad\qquad\qquad+\displaystyle 2B_1B_2B_3\int_0^t(t-s)^{-\frac{1}{2}-\frac{q_1-q_2}{2}}\|\omega^{(k)}(\cdot,s)\|_{1,q_1}^\theta\|\omega^{(k)}(\cdot,s)\|_{p,q}^{2-\theta}{\rm d}s\\
&\qquad\qquad\qquad\qquad+\displaystyle 2B_1B_2B_3\int_0^t(t-s)^{-\frac{1}{2}-\frac{q_1-q_2}{2}}\|j^{(k)}(\cdot,s)\|_{1,q_1}^\theta\|j^{(k)}(\cdot,s)\|_{p,q}^{2-\theta}{\rm d}s.
\end{array}\eqno(3.67)$$
Using (3.58) and (3.66), one has
$$\begin{array}{ll}
&\quad\|j^{(k+1)}(\cdot,t)\|_{1,q_1}\vspace{1ex}\\
&\leq\|G(\cdot,t)\ast j_0\|_{1,q_1}\vspace{1ex}\\
&\quad+\displaystyle B_1B_2B_3\int_0^t(t-s)^{-\frac{1}{2}-\frac{q_1-q_2}{2}}\|\omega^{(k)}(\cdot,s)\|_{1,q_1}^\theta\|\omega^{(k)}(\cdot,s)\|_{p,q}^{1-\theta}\|j^{(k)}(\cdot,s)\|_{p,q}{\rm d}s\\
&\quad+\displaystyle B_1B_2B_3\int_0^t(t-s)^{-\frac{1}{2}-\frac{q_1-q_2}{2}}\|j^{(k)}(\cdot,s)\|_{1,q_1}^\theta\|j^{(k)}(\cdot,s)\|_{p,q}^{1-\theta}\|\omega^{(k)}(\cdot,s)\|_{p,q}{\rm d}s.
\end{array}\eqno(3.68)$$
Invoking Proposition 4, (3.67) and (3.68) yield that
$$\begin{array}{ll}
&W_{k+1,1,q_1}^0\leq A_5\|\omega_0\|_{1,\tilde{q_0}}+2B_1B_2B_3(W_{k,1,q_1}^0)^\theta(W_{k,p,q}^0)^{2-\theta}\vspace{1ex}\\
&\qquad\qquad\times\displaystyle t^{\frac{q_1-\tilde{q_0}}{2}}\int_0^t(t-s)^{-\frac{1}{2}-\frac{q_1-q_2}{2}}s^{-\frac{q_1-\tilde{q_0}}{2}\theta}s^{(\frac{3-q}{2p}-1)(2-\theta)}{\rm d}s\vspace{1ex}\\
&\qquad\qquad+2B_1B_2B_3(J_{k,1,q_1}^0)^\theta (J_{k,p,q}^0)^{2-\theta}\vspace{1ex}\\
&\qquad\qquad\times\displaystyle t^{\frac{q_1-\tilde{q_0}}{2}}\int_0^t(t-s)^{-\frac{1}{2}-\frac{q_1-q_2}{2}}s^{-\frac{q_1-\tilde{q_0}}{2}\theta}s^{(\frac{3-q}{2p}-1)(2-\theta)}{\rm d}s,
\end{array}\eqno(3.69)$$
$$\begin{array}{ll}
&J_{k+1,1,q_1}^0\leq A_5\|j_0\|_{1,\tilde{q_0}}+\displaystyle B_1B_2B_3(W_{k,1,q_1}^0)^\theta(W_{k,p,q}^0)^{1-\theta}J_{k,p,q}^0\vspace{1ex}\\
&\qquad\qquad\times\displaystyle\int_0^t(t-s)^{-\frac{1}{2}-\frac{q_1-q_2}{2}}s^{-\frac{q_1-\tilde{q_0}}{2}\theta}s^{(\frac{3-q}{2p}-1)(2-\theta)}{\rm d}s\vspace{1ex}\\
&\qquad\qquad+\displaystyle B_1B_2B_3(J_{k,1,q_1}^0)^\theta(J_{k,p,q}^0)^{1-\theta}W_{k,p,q}^0\vspace{1ex}\\
&\qquad\qquad\times\displaystyle\int_0^t(t-s)^{-\frac{1}{2}-\frac{q_1-q_2}{2}}s^{-\frac{q_1-\tilde{q_0}}{2}\theta}s^{(\frac{3-q}{2p}-1)(2-\theta)}{\rm d}s.
\end{array}\eqno(3.70)$$
Observing that
$$\frac{1}{2}-\frac{q_1-q_2}{2}>0,\ \ \ \Big(\frac{3-q}{2p}-1\Big)(2-\theta)-\frac{q_1-\tilde{q_0}}{2}\theta+1>0,$$
$$\frac{1}{2}-\frac{q_1-q_2}{2}+\Big(\frac{3-q}{2p}-1\Big)(2-\theta)-\frac{q_1-\tilde{q_0}}{2}\theta=\frac{\tilde{q_0}-q_1}{2}.$$
Thus, it holds that
$$\int_0^t(t-s)^{-\frac{1}{2}-\frac{q_1-q_2}{2}}s^{-\frac{q_1-\tilde{q_0}}{2}\theta}s^{(\frac{3-q}{2p}-1)(2-\theta)}{\rm d}s\leq Ct^{\frac{\tilde{q_0}-q_1}{2}}.\eqno(3.71)$$
It follows from (3.69)-(3.71) that
$$W_{k+1,1,q_1}^0\leq A_5\|\omega_0\|_{1,\tilde{q_0}}+B_4((W_{k,1,q_1}^0)^\theta(W_{k,p,q}^0)^{2-\theta})+B_4((J_{k,1,q_1}^0)^\theta(J_{k,p,q}^0)^{2-\theta}),\eqno(3.72)$$
$$J_{k+1,1,q_1}^0\leq A_5\|j_0\|_{1,\tilde{q_0}}+B_4((W_{k,1,q_1}^0)^\theta(W_{k,p,q}^0)^{1-\theta}J_{k,p,q}^0)
+B_4((J_{k,1,q_1}^0)^\theta(J_{k,p,q}^0)^{1-\theta}W_{k,p,q}^0).\eqno(3.73)$$
Inequality (3.12) together with (3.72) and (3.73) may lead to
$$\begin{array}{ll}
&\quad W_{k+1,1,q_1}^0+J_{k+1,1,q_1}^0\vspace{1ex}\\
&\leq A_5(\|\omega_0\|_{1,\tilde{q_0}}+\|j_0\|_{1,\tilde{q_0}})\vspace{1ex}\\
&\quad +B_4((W_{k,1,q_1}^0)^\theta(W_{k,p,q}^0)^{2-\theta}+(J_{k,1,q_1}^0)^\theta(J_{k,p,q}^0)^{2-\theta})\vspace{1ex}\\
&\quad+B_4((W_{k,1,q_1}^0)^\theta(W_{k,p,q}^0)^{1-\theta}J_{k,p,q}^0)+B_4((J_{k,1,q_1}^0)^\theta(J_{k,p,q}^0)^{1-\theta}W_{k,p,q}^0)\vspace{1ex}\\
&\leq A_5(\|\omega_0\|_{1,\tilde{q_0}}+\|j_0\|_{1,\tilde{q_0}})
+B_5(\|\omega_0\|_{p_0,q_0}+\|j_0\|_{p_0,q_0})^{2-\theta}(W_{k,1,q_1}^0+J_{k,1,q_1}^0)^\theta,
\end{array}\eqno(3.74)$$
whenever (3.13) holds.

On the other hand, by Proposition 4, it holds that
$$W_{1,1,q_1}^0+J_{1,1,q_1}^0\leq A_5(\|\omega_0\|_{1,\tilde{q_0}}+\|j_0\|_{1,\tilde{q_0}}).\eqno(3.75)$$
Assume that (3.13) holds and
$$\max\{A_5(\|\omega_0\|_{1,\tilde{q_0}}+\|j_0\|_{1,\tilde{q_0}}),B_5(\|\omega_0\|_{p_0,q_0}+\|j_0\|_{p_0,q_0})^{2-\theta}\}\leq1/2.\eqno(3.76)$$
Then (3.74) and (3.75) may yields that
$$W_{k,1,q_1}^0+J_{k,1,q_1}^0\leq2\ \ \ {\rm for\ all}\  k\geq1,\eqno(3.77)$$
whenever (3.13) and (3.75) hold. (3.77) together with (3.74) implies that
$$W_{k,1,q_1}^0+J_{k,1,q_1}^0\leq C_{10}(\|\omega_0\|_{1,\tilde{q_0}}+\|j_0\|_{1,\tilde{q_0}}+\|\omega_0\|_{p_0,q_0}+\|j_0\|_{p_0,q_0})\ \ \ \ {\rm for\ all}\ k\geq1\eqno(3.78)$$
whenever (3.13) and (3.76) hold.
\medskip

{\bf Step 2: Estimates for  the terms $\bar{W}_{k,1,q_1}^0$ and $\bar{J}_{k,1,q_1}^0$.}

\medskip

Note that
$$\frac{2}{q_1-\tilde{q_0}}>1,\ \frac{2}{1+q_1-q_2}>1,\ \frac{2}{q_2-\tilde{q_0}+1}>1,\ \frac{q_1-\tilde{q_0}}{2}+1
=\frac{1+q_1-q_2}{2}+\frac{q_2-\tilde{q_0}+1}{2}.$$
By (3.67), (3.68), Proposition 4, Lemma 2 and the following Young's
inequality
$$\|f*g\|_{L^{\frac{2}{q_1-\tilde{q_0}}}(\mathbb{R}^{+})}\leq
\|f\|_{L^{\frac{2}{1+q_1-q_2},\infty}(\mathbb{R}^{+})}\|g\|_{L^{\frac{2}{q_2-\tilde{q_0}+1}}(\mathbb{R}^{+})},$$
one can obtain that
$$\begin{array}{ll}
&\bar{W}_{k+1,1,q_1}^0
\leq A_5\|\omega_0\|_{1,\tilde{q_0}}+2B_1B_2B_3\|t^{-\frac{1+q_1-q_2}{2}}\|_{L^{\frac{2}{1+q_1-q_2},\infty}(\mathbb{R}^{+})}\vspace{1ex}\\
&\qquad\qquad\quad\times\|(\|\omega^{(k)}(\cdot,t)\|_{1,q_1}^\theta\|\omega^{(k)}(\cdot,t)\|_{p,q}^{2-\theta}\vspace{1ex}\\
&\qquad\qquad\quad+\|j^{(k)}(\cdot,t)\|_{1,q_1}^\theta\|j^{(k)}(\cdot,t)\|_{p,q}^{2-\theta})\|_{L^{\frac{2}{q_2-\tilde{q_0}+1}}(\mathbb{R}^{+})},
\end{array}\eqno(3.79)$$
$$\begin{array}{ll}
&\bar{J}_{k+1,1,q_1}^0
\leq A_5\|j_0\|_{1,\tilde{q_0}}+B_1B_2B_3\|t^{-\frac{1+q_1-q_2}{2}}\|_{L^{\frac{2}{1+q_1-q_2},\infty}(\mathbb{R}^{+})}\vspace{1ex}\\
&\qquad\qquad\quad\times\|(\|\omega^{(k)}(\cdot,t)\|_{1,q_1}^\theta\|\omega^{(k)}(\cdot,t)\|_{p,q}^{1-\theta}\|j^{(k)}(\cdot,t)\|_{p,q}\vspace{1ex}\\
&\qquad\qquad\quad+\|j^{(k)}(\cdot,t)\|_{1,q_1}^\theta\|j^{(k)}(\cdot,t)\|_{p,q}^{1-\theta}\|\omega^{(k)}(\cdot,t)\|_{p,q})\|_{L^{\frac{2}{q_2-\tilde{q_0}+1}}(\mathbb{R}^{+})}.
\end{array}\eqno(3.80)$$
Observe that
$$\frac{2}{q_1-\tilde{q_0}}>\theta,\ \ \ \frac{2p}{2p-3+q}>2-\theta,\ \ \ \frac{q_2-\tilde{q_0}+1}{2}=\frac{q_1-\tilde{q_0}}{2}\theta+\frac{2p-3+q}{2p}(2-\theta).$$
(3.79) and (3.80) together with H\"{o}lder's inequality imply that
$$\bar{W}_{k+1,1,q_1}^0\leq A_5\|\omega_0\|_{1,\tilde{q_0}}+B_6((\bar{W}_{k,1,q_1}^0)^\theta(\bar{W}_{k,p,q}^0)^{2-\theta}
+(\bar{J}_{k,1,q_1}^0)^\theta(\bar{J}_{k,p,q}^0)^{2-\theta}),\eqno(3.81)$$
$$\bar{J}_{k+1,1,q_1}^0\leq A_5\|j_0\|_{1,\tilde{q_0}}+B_6((\bar{W}_{k,1,q_1}^0)^\theta(\bar{W}_{k,p,q}^0)^{1-\theta}\bar{J}_{k,p,q}^0
+(\bar{J}_{k,1,q_1}^0)^\theta(\bar{J}_{k,p,q}^0)^{1-\theta}\bar{W}_{k,p,q}^0).\eqno(3.82)$$
By (3.81), (3.82) and (3.12), we have
$$\begin{array}{ll}
\bar{W}_{k+1,1,q_1}^0+\bar{J}_{k+1,1,q_1}^0&\leq A_5(\|\omega_0\|_{1,\tilde{q_0}}+\|j_0\|_{1,\tilde{q_0}})\vspace{1ex}\\
&\quad+16A_1B_6(\bar{W}_{k,1,q_1}^0+\bar{J}_{k,1,q_1}^0)^\theta(\|\omega_0\|_{p_0,q_0}+\|j_0\|_{p_0,q_0})^{2-\theta},
\end{array}\eqno(3.83)$$
whenever (3.13) holds.

On the other hand, by Proposition 4 and Lemma 2, it holds that
$$\bar{W}_{1,1,q_1}^0+\bar{J}_{1,1,q_1}^0\leq A_5(\|\omega_0\|_{1,\tilde{q_0}}+\|j_0\|_{1,\tilde{q_0}}).\eqno(3.84)$$
By (3.83), (3.84) and the argument similar to those used in
deriving (3.78), it holds that
$$\bar{W}_{k,1,q_1}^0+\bar{J}_{k,1,q_1}^0\leq 2\ \ \ \ {\rm for\ all}\ k\geq1\eqno(3.85)$$
whenever (3.13) holds and
$$\max\{A_5(\|\omega_0\|_{1,\tilde{q_0}}+\|j_0\|_{1,\tilde{q_0}}),16A_1B_6(\|\omega_0\|_{p_0,q_0}+\|j_0\|_{p_0,q_0})^{2-\theta}\}\leq1/2.\eqno(3.86)$$
Inequality (3.85) together with (3.83) also yields that
$$\bar{W}_{k,1,q_1}^0+\bar{J}_{k,1,q_1}^0\leq C_{11}(\|\omega_0\|_{1,\tilde{q_0}}+\|j_0\|_{1,\tilde{q_0}}+\|\omega_0\|_{p_0,q_0}+\|j_0\|_{p_0,q_0})\ \ \ \ {\rm for\ all}\ k\geq1\eqno(3.87)$$
whenever (3.13) and (3.86) hold.

{\bf Step 3: Estimates for  the terms $W_{k,1,q_1}^1,\,J_{k,1,q_1}^1$.}

By (3.3) and Proposition 4, we get
$$\begin{array}{ll}
&\quad\|\nabla\omega^{(k+1)}\|_{1,q_1}\\
&\leq\displaystyle\|\nabla G(\cdot,t)\ast\omega_0\|_{1,q_1}+\displaystyle B_3\int_{t/2}^t(t-s)^{-\frac{1}{2}-\frac{q_1-q_2}{2}}\|((u^{(k)}\cdot\nabla)\omega^{(k)}-(\omega^{(k)}\cdot\nabla)u^{(k)}\\
&\quad-(b^{(k)}\cdot\nabla)j^{(k)}+(j^{(k)}\cdot\nabla)b^{(k)})(\cdot,s)\|_{1,q_2}{\rm d}s\\
&\quad+\displaystyle B_3\int_{0}^{t/2}(t-s)^{-1-\frac{q_1-q_2}{2}}\|(u^{i,(k)}\omega^{(k)}-u^{(k)}\omega^{i,(k)}\\
&\quad-b^{i,(k)}j^{(k)}+b^{(k)}j^{i,(k)})(\cdot,s)\|_{1,q_2}{\rm d}s,
\end{array}\eqno(3.88)$$
$$\begin{array}{ll}
&\quad\|\nabla j^{(k+1)}\|_{1,q_1}\\
&\leq\displaystyle\|\nabla G(\cdot,t)\ast j_0\|_{1,q_1}+\displaystyle B_3\int_{t/2}^t(t-s)^{-\frac{1}{2}-\frac{q_1-q_2}{2}}\|((u^{(k)}\cdot\nabla)j^{(k)}+\nabla u^{i,(k)}\times b^{(k)}_{x_i}\\
&\quad-(b^{(k)}\cdot\nabla)\omega^{(k)}-\nabla b^{i,(k)}\times u^{(k)}_{x_i})(\cdot,s)\|_{1,q_2}{\rm d}s\\
&\quad+\displaystyle B_3\int_0^{t/2}(t-s)^{-1-\frac{q_1-q_2}{2}}\|((u^{(k)}\cdot\nabla)b^{(k)}-(b^{(k)}\cdot\nabla)u^{(k)})(\cdot,s)\|_{1,q_2}{\rm d}s.
\end{array}\eqno(3.89)$$
By the arguments similar to those used in deriving (3.65) and (3.66),
one can get
$$\begin{array}{ll}
&\quad\|((u^{(k)}\cdot\nabla)\omega^{(k)}-(\omega^{(k)}\cdot\nabla)u^{(k)}-(b^{(k)}\cdot\nabla)j^{(k)}+(j^{(k)}\cdot\nabla)b^{(k)})(\cdot,s)\|_{1,q_2}\vspace{1ex}\\
&\leq B_4(\|\omega^{(k)}(\cdot,s)\|_{\tilde{p},1}\|\nabla\omega^{(k)}(\cdot,s)\|_{p,q}+\|\nabla\omega^{(k)}(\cdot,s)\|_{\tilde{p},1}\|\omega^{(k)}(\cdot,s)\|_{p,q})\vspace{1ex}\\
&\quad+B_4(\|\nabla j^{(k)}(\cdot,s)\|_{\tilde{p},1}\|j^{(k)}(\cdot,s)\|_{p,q}+\|j^{(k)}(\cdot,s)\|_{\tilde{p},1}\|\nabla j^{(k)}(\cdot,s)\|_{p,q})\vspace{1ex}\\
&\leq B_4(\|\omega^{(k)}(\cdot,s)\|_{1,q_1}^\theta\|\omega^{(k)}(\cdot,s)\|_{p,q}^{(1-\theta)}\|\nabla\omega^{(k)}(\cdot,s)\|_{p,q}\vspace{1ex}\\
&\quad+\|\nabla\omega^{(k)}(\cdot,s)\|_{1,q_1}^\theta\|\nabla\omega^{(k)}(\cdot,s)\|_{p,q}^{(1-\theta)}\|\omega^{(k)}(\cdot,s)\|_{p,q})\vspace{1ex}\\
&\quad+B_4(\|\nabla j^{(k)}(\cdot,s)\|_{1,q_1}^\theta\|\nabla j^{(k)}(\cdot,s)\|_{p,q}^{(1-\theta)}\|j^{(k)}(\cdot,s)\|_{p,q}\vspace{1ex}\\
&\quad+\|j^{(k)}(\cdot,s)\|_{1,q_1}^\theta\|j^{(k)}(\cdot,s)\|_{p,q}^{(1-\theta)}\|\nabla j^{(k)}(\cdot,s)\|_{p,q}),
\end{array}\eqno(3.90)$$
$$\begin{array}{ll}
&\quad \|((u^{(k)}\cdot\nabla)j^{(k)}+\nabla u^{i,(k)}\times b^{(k)}_{x_i}-(b^{(k)}\cdot\nabla)\omega^{(k)}-\nabla b^{i,(k)}\times u^{(k)}_{x_i})(\cdot,s)\|_{1,q_2}\vspace{1ex}\\
&\leq B_4(\|\omega^{(k)}(\cdot,s)\|_{\tilde{p},1}\|\nabla j^{(k)}(\cdot,s)\|_{p,q}+2\|\omega^{(k)}(\cdot,s)\|_{p,q}\|\nabla j^{(k)}(\cdot,s)\|_{\tilde{p},1}\vspace{1ex}\\
&\quad+\|j^{(k)}(\cdot,s)\|_{\tilde{p},1}\|\nabla\omega^{(k)}(\cdot,s)\|_{p,q})\vspace{1ex}\\
&\leq B_4(\|\omega^{(k)}(\cdot,s)\|_{1,q_1}^\theta\|\omega^{(k)}(\cdot,s)\|_{p,q}^{(1-\theta)}\|\nabla j^{(k)}(\cdot,s)\|_{p,q}\vspace{1ex}\\
&\quad+2\|\omega^{(k)}(\cdot,s)\|_{p,q}\|\nabla j^{(k)}(\cdot,s)\|_{1,q_1}^\theta\|\nabla j^{(k)}(\cdot,s)\|_{p,q}^{(1-\theta)}\vspace{1ex}\\
&\quad+\|j^{(k)}(\cdot,s)\|_{1,q_1}^\theta\|j^{(k)}(\cdot,s)\|_{p,q}^{(1-\theta)}\|\nabla\omega^{(k)}(\cdot,s)\|_{p,q}).
\end{array}\eqno(3.91)$$
It follows from (3.90) that
$$\begin{array}{ll}
&\quad\displaystyle\int_{t/2}^t(t-s)^{-\frac{1}{2}-\frac{q_1-q_2}{2}}\|((u^{(k)}\cdot\nabla)\omega^{(k)}-(\omega^{(k)}\cdot\nabla)u^{(k)}\\
&\quad-(b^{(k)}\cdot\nabla)j^{(k)}+(j^{(k)}\cdot\nabla)b^{(k)})(\cdot,s)\|_{1,q_2}{\rm d}s\\
&\leq\displaystyle B_4\int_{t/2}^t(t-s)^{-\frac{1}{2}-\frac{q_1-q_2}{2}}(\|\omega^{(k)}(\cdot,s)\|_{1,q_1}^\theta\|\omega^{(k)}(\cdot,s)\|_{p,q}^{(1-\theta)}\|\nabla\omega^{(k)}(\cdot,s)\|_{p,q}){\rm d}s\\
&\quad+\displaystyle B_4\int_{t/2}^t(t-s)^{-\frac{1}{2}-\frac{q_1-q_2}{2}}(\|\nabla\omega^{(k)}(\cdot,s)\|_{1,q_1}^\theta\|\nabla\omega^{(k)}(\cdot,s)\|_{p,q}^{(1-\theta)}\|\omega^{(k)}(\cdot,s)\|_{p,q}){\rm d}s\\
&\quad+\displaystyle B_4\int_{t/2}^t(t-s)^{-\frac{1}{2}-\frac{q_1-q_2}{2}}(\|\nabla j^{(k)}(\cdot,s)\|_{1,q_1}^\theta\|\nabla j^{(k)}(\cdot,s)\|_{p,q}^{(1-\theta)}\|j^{(k)}(\cdot,s)\|_{p,q}){\rm d}s\\
&\quad+\displaystyle B_4\int_{t/2}^t(t-s)^{-\frac{1}{2}-\frac{q_1-q_2}{2}}(\|j^{(k)}(\cdot,s)\|_{1,q_1}^\theta\|j^{(k)}(\cdot,s)\|_{p,q}^{(1-\theta)}\|\nabla j^{(k)}(\cdot,s)\|_{p,q}){\rm d}s.
\end{array}\eqno(3.92)$$
Noting that
$$\frac{1}{2}-\frac{q_1-q_2}{2}>0,\ \ \ -\frac{q_1-\tilde{q_0}}{2}\theta-\frac{2p-3+q}{2p}(1-\theta)-\frac{3p-3+q}{2p}+1>0,$$
$$\frac{1}{2}-\frac{q_1-q_2}{2}-\frac{q_1-\tilde{q_0}}{2}\theta-\frac{2p-3+q}{2p}(1-\theta)-\frac{3p-3+q}{2p}=\frac{\tilde{q_0}-q_1-1}{2}.$$
It follows that
$$\begin{array}{ll}
&\quad\displaystyle\int_{t/2}^t(t-s)^{-\frac{1}{2}-\frac{q_1-q_2}{2}}(\|\omega^{(k)}(\cdot,s)\|_{1,q_1}^\theta\|\omega^{(k)}(\cdot,s)\|_{p,q}^{(1-\theta)}\|\nabla\omega^{(k)}(\cdot,s)\|_{p,q}){\rm d}s\vspace{1ex}\\
&\leq\displaystyle(W_{k,1,q_1}^0)^\theta(W_{k,p,q}^0)^{1-\theta}W_{k,p,q}^1\vspace{1ex}\\
&\quad\times\displaystyle\int_{t/2}^t(t-s)^{-\frac{1}{2}-\frac{q_1-q_2}{2}}s^{-\frac{q_1-\tilde{q_0}}{2}\theta}s^{-\frac{2p-3+q}{2p}(1-\theta)}s^{-\frac{3p-3+q}{2p}}{\rm d}s\vspace{1ex}\\
&\leq B_5(W_{k,1,q_1}^0)^\theta(W_{k,p,q}^0)^{1-\theta}W_{k,p,q}^1t^{\frac{\tilde{q_0}-q_1-1}{2}}.
\end{array}\eqno(3.93)$$
Similarly, we can get
$$\begin{array}{ll}
&\quad\displaystyle\int_{t/2}^t(t-s)^{-\frac{1}{2}-\frac{q_1-q_2}{2}}(\|\nabla\omega^{(k)}(\cdot,s)\|_{1,q_1}^\theta\|\nabla\omega^{(k)}(\cdot,s)\|_{p,q}^{(1-\theta)}\|\omega^{(k)}(\cdot,s)\|_{p,q}){\rm d}s\\
&\leq\displaystyle B_5(W_{k,1,q_1}^1)^\theta(W_{k,p,q}^1)^{1-\theta}W_{k,p,q}^0t^{\frac{\tilde{q_0}-q_1-1}{2}},
\end{array}\eqno(3.94)$$
$$\begin{array}{ll}
&\quad\displaystyle\int_{t/2}^t(t-s)^{-\frac{1}{2}-\frac{q_1-q_2}{2}}(\|\nabla j^{(k)}(\cdot,s)\|_{1,q_1}^\theta\|\nabla j^{(k)}(\cdot,s)\|_{p,q}^{(1-\theta)}\|j^{(k)}(\cdot,s)\|_{p,q}){\rm d}s\\
&\leq\displaystyle B_5(J_{k,1,q_1}^1)^\theta(W_{k,p,q}^1)^{1-\theta}W_{k,p,q}^0t^{\frac{\tilde{q_0}-q_1-1}{2}}.
\end{array}\eqno(3.95)$$
$$\begin{array}{ll}
&\quad\displaystyle\int_{t/2}^t(t-s)^{-\frac{1}{2}-\frac{q_1-q_2}{2}}(\|j^{(k)}(\cdot,s)\|_{1,q_1}^\theta\|j^{(k)}(\cdot,s)\|_{p,q}^{(1-\theta)}\|\nabla j^{(k)}(\cdot,s)\|_{p,q}){\rm d}s\\
&\leq\displaystyle B_5(J_{k,1,q_1}^0)^\theta(J_{k,p,q}^0)^{1-\theta}J_{k,p,q}^1t^{\frac{\tilde{q_0}-q_1-1}{2}}.
\end{array}\eqno(3.96)$$
It follows from (3.92)-(3.96) that
$$\begin{array}{ll}
&\quad\displaystyle\int_{t/2}^t(t-s)^{-\frac{1}{2}-\frac{q_1-q_2}{2}}\|((u^{(k)}\cdot\nabla)\omega^{(k)}-(\omega^{(k)}\cdot\nabla)u^{(k)}\\
&\quad-(b^{(k)}\cdot\nabla)j^{(k)}+(j^{(k)}\cdot\nabla)b^{(k)})(\cdot,s)\|_{1,q_2}{\rm d}s\vspace{1ex}\\
&\leq B_4B_5((W_{k,1,q_1}^0)^\theta(W_{k,p,q}^0)^{1-\theta}W_{k,p,q}^1+(W_{k,1,q_1}^1)^\theta(W_{k,p,q}^1)^{1-\theta}W_{k,p,q}^0)t^{\frac{\tilde{q_0}-q_1-1}{2}}\vspace{1ex}\\
&\quad+B_4B_5((J_{k,1,q_1}^1)^\theta(W_{k,p,q}^1)^{1-\theta}W_{k,p,q}^0+(J_{k,1,q_1}^0)^\theta(J_{k,p,q}^0)^{1-\theta}J_{k,p,q}^1)t^{\frac{\tilde{q_0}-q_1-1}{2}}.
\end{array}\eqno(3.97)$$
On the other hand, by (3.65) we get
$$\begin{array}{ll}
&\quad\displaystyle\int_{0}^{t/2}(t-s)^{-1-\frac{q_1-q_2}{2}}\|(u^{i,(k)}\omega^{(k)}-u^{(k)}\omega^{i,(k)}-b^{i,(k)}j^{(k)}+b^{(k)}j^{i,(k)})(\cdot,s)\|_{1,q_2}{\rm d}s\\
&\leq\displaystyle2B_2B_3\int_{0}^{t/2}(t-s)^{-1-\frac{q_1-q_2}{2}}\|\omega^{(k)}(\cdot,s)\|_{1,q_1}^\theta\|\omega^{(k)}(\cdot,s)\|_{p,q}^{2-\theta}{\rm d}s\\
&\quad+\displaystyle 2B_2B_3\int_{0}^{t/2}(t-s)^{-1-\frac{q_1-q_2}{2}}\|j^{(k)}(\cdot,s)\|_{1,q_1}^\theta\|j^{(k)}(\cdot,s)\|_{p,q}^{2-\theta}{\rm d}s.
\end{array}\eqno(3.98)$$
Note that
$$-1-\frac{q_1-q_2}{2}-\frac{q_1-\tilde{q_0}}{2}\theta-\frac{2p-3+q}{2p}(2-\theta)+1=\frac{\tilde{q_0}-q_1-1}{2}.$$
It follows that
$$\begin{array}{ll}
&\quad\displaystyle\int_{0}^{t/2}(t-s)^{-1-\frac{q_1-q_2}{2}}\|\omega^{(k)}(\cdot,s)\|_{1,q_1}^\theta\|\omega^{(k)}(\cdot,s)\|_{p,q}^{2-\theta}{\rm d}s\\
&\leq\displaystyle(W_{k,1,q_1}^0)^\theta(W_{k,p,q}^0)^{2-\theta}\int_{0}^{t/2}(t-s)^{-1-\frac{q_1-q_2}{2}}s^{-\frac{q_1-\tilde{q_0}}{2}\theta}s^{-\frac{2p-3+q}{2p}(2-\theta)}ds\vspace{1ex}\\
&\leq B_6(W_{k,1,q_1}^0)^\theta(W_{k,p,q}^0)^{2-\theta}t^{\frac{\tilde{q_0}-q_1-1}{2}}.
\end{array}\eqno(3.99)$$
Similarly, we can get
$$\int_{0}^{t/2}(t-s)^{-1-\frac{q_1-q_2}{2}}\|j^{(k)}(\cdot,s)\|_{1,q_1}^\theta\|j^{(k)}(\cdot,s)\|_{p,q}^{2-\theta}{\rm d}s
\leq B_6(J_{k,1,q_1}^0)^\theta(J_{k,p,q}^0)^{2-\theta}t^{\frac{\tilde{q_0}-q_1-1}{2}}.\eqno(3.100)$$
It follows from (3.88), Proposition 4 and (3.92)-(3.100) that
$$\begin{array}{ll}
&\quad W_{k+1,1,q_1}^1\vspace{1ex}\\
&\leq B_7\|\omega_0\|_{1,\tilde{q_0}}+B_7(W_{k,1,q_1}^0)^\theta(W_{k,p,q}^0)^{1-\theta}W_{k,p,q}^1+B_7(J_{k,1,q_1}^0)^\theta(J_{k,p,q}^0)^{1-\theta}J_{k,p,q}^1\vspace{1ex}\\
&\quad+B_7(W_{k,1,q_1}^1)^\theta(W_{k,p,q}^1)^{1-\theta}W_{k,p,q}^0+B_7(J_{k,1,q_1}^1)^\theta(W_{k,p,q}^1)^{1-\theta}W_{k,p,q}^0\vspace{1ex}\\
&\quad+B_7(W_{k,1,q_1}^0)^\theta(W_{k,p,q}^0)^{2-\theta}+B_7(J_{k,1,q_1}^0)^\theta(J_{k,p,q}^0)^{2-\theta}.
\end{array}\eqno(3.101)$$
We get from (3.91) that
$$\begin{array}{ll}
&\quad\displaystyle\int_{t/2}^t(t-s)^{-\frac{1}{2}-\frac{q_1-q_2}{2}}\|((u^{(k)}\cdot\nabla)j^{(k)}+\nabla u^{i,(k)}\times b^{(k)}_{x_i}\vspace{1ex}\\
&\qquad-(b^{(k)}\cdot\nabla)\omega^{(k)}-\nabla b^{i,(k)}\times u^{(k)}_{x_i})(\cdot,s)\|_{1,q_2}{\rm d}s\vspace{1ex}\\
&\leq \displaystyle B_4\int_{t/2}^t(t-s)^{-\frac{1}{2}-\frac{q_1-q_2}{2}}(\|\omega^{(k)}(\cdot,s)\|_{1,q_1}^\theta\|\omega^{(k)}(\cdot,s)\|_{p,q}^{(1-\theta)}\|\nabla j^{(k)}(\cdot,s)\|_{p,q}){\rm d}s\vspace{1ex}\\
&\quad+\displaystyle B_4\int_{t/2}^t(t-s)^{-\frac{1}{2}-\frac{q_1-q_2}{2}}\|j^{(k)}(\cdot,s)\|_{1,q_1}^\theta\|j^{(k)}(\cdot,s)\|_{p,q}^{(1-\theta)}\|\nabla\omega^{(k)}(\cdot,s)\|_{p,q}{\rm d}s\vspace{1ex}\\
&\quad+\displaystyle2B_4\int_{t/2}^t(t-s)^{-\frac{1}{2}-\frac{q_1-q_2}{2}}\|\omega^{(k)}(\cdot,s)\|_{p,q}\|\nabla j^{(k)}(\cdot,s)\|_{1,q_1}^\theta\|\nabla j^{(k)}(\cdot,s)\|_{p,q}^{(1-\theta)}{\rm d}s.
\end{array}\eqno(3.102)$$
Similar arguments to those in deriving (3.93) may yield that
$$\begin{array}{ll}
&\quad\displaystyle\int_{t/2}^t(t-s)^{-\frac{1}{2}-\frac{q_1-q_2}{2}}(\|\omega^{(k)}(\cdot,s)\|_{1,q_1}^\theta\|\omega^{(k)}(\cdot,s)\|_{p,q}^{(1-\theta)}\|\nabla j^{(k)}(\cdot,s)\|_{p,q}){\rm d}s\vspace{1ex}\\
&\leq B_8(W_{k,1,q_1}^0)^\theta(W_{k,p,q}^0)^{(1-\theta)}J_{k,p,q}^1t^{\frac{\tilde{q_0}-q_1-1}{2}},
\end{array}\eqno(3.103)$$
$$\begin{array}{ll}
&\quad\displaystyle\int_{t/2}^t(t-s)^{-\frac{1}{2}-\frac{q_1-q_2}{2}}(\|j^{(k)}(\cdot,s)\|_{1,q_1}^\theta\|j^{(k)}(\cdot,s)\|_{p,q}^{(1-\theta)}\|\nabla\omega^{(k)}(\cdot,s)\|_{p,q}){\rm d}s\vspace{1ex}\\
&\leq B_8(J_{k,1,q_1}^0)^\theta(J_{k,p,q}^0)^{(1-\theta)}W_{k,p,q}^1t^{\frac{\tilde{q_0}-q_1-1}{2}}.
\end{array}\eqno(3.104)$$
Note that
$$-\frac{1}{2}-\frac{q_1-q_2}{2}+\frac{3-q}{2p}-1+\frac{\tilde{q_0}-q_1-1}{2}\theta+\frac{3-q-3p}{2p}(1-\theta)+1=\frac{\tilde{q_0}-q_1-1}{2}.$$
It follows that
$$\begin{array}{ll}
&\quad\displaystyle\int_{t/2}^t(t-s)^{-\frac{1}{2}-\frac{q_1-q_2}{2}}\|\omega^{(k)}(\cdot,s)\|_{p,q}\|\nabla j^{(k)}(\cdot,s)\|_{1,q_1}^\theta\|\nabla j^{(k)}(\cdot,s)\|_{p,q}^{(1-\theta)}{\rm d}s\vspace{1ex}\\
&\leq\displaystyle W_{k,p,q}^0(J_{k,1,q_1}^1)^{\theta}(J_{k,p,q}^1)^{1-\theta}\int_{t/2}^t(t-s)^{-\frac{1}{2}-\frac{q_1-q_2}{2}}s^{\frac{3-q}{2p}-1}s^{\frac{\tilde{q_0}-q_1-1}{2}\theta}s^{\frac{3-q-3p}{2p}(1-\theta)}ds\vspace{1ex}\\
&\leq B_9W_{k,p,q}^0(J_{k,1,q_1}^1)^{\theta}(J_{k,p,q}^1)^{1-\theta}t^{\frac{\tilde{q_0}-q_1-1}{2}}.
\end{array}\eqno(3.105)$$
It follows from (3.102)-(3.105) that
$$\begin{array}{ll}
&\quad\displaystyle\int_{t/2}^t(t-s)^{-\frac{1}{2}-\frac{q_1-q_2}{2}}\|((u^{(k)}\cdot\nabla)j^{(k)}+\nabla u^{i,(k)}\times b^{(k)}_{x_i}\\
&\quad-(b^{(k)}\cdot\nabla)\omega^{(k)}-\nabla b^{i,(k)}\times u^{(k)}_{x_i})(\cdot,s)\|_{1,q_2}{\rm d}s\vspace{1ex}\\
&\leq B_{10}t^{\frac{\tilde{q_0}-q_1-1}{2}}((W_{k,1,q_1}^0)^\theta(W_{k,p,q}^0)^{(1-\theta)}J_{k,p,q}^1+(J_{k,1,q_1}^0)^\theta(J_{k,p,q}^0)^{(1-\theta)}W_{k,p,q}^1\vspace{1ex}\\
&\quad+W_{k,p,q}^0(J_{k,1,q_1}^1)^{\theta}(J_{k,p,q}^1)^{1-\theta}).
\end{array}\eqno(3.106)$$
Note that
$$-1-\frac{q_1-q_2}{2}-\frac{q_1-\tilde{q_0}}{2}\theta+\frac{3-q-2p}{2p}(1-\theta)+\frac{3-q-2p}{2p}+1=\frac{\tilde{q_0}-q_1-1}{2}.$$
By (3.66), it holds that
$$\begin{array}{ll}
&\quad\displaystyle\int_0^{t/2}(t-s)^{-1-\frac{q_1-q_2}{2}}\|((u^{(k)}\cdot\nabla)b^{(k)}-(b^{(k)}\cdot\nabla)u^{(k)})(\cdot,s)\|_{1,q_2}{\rm d}s\\
&\leq\displaystyle B_2B_3\int_0^{t/2}(t-s)^{-1-\frac{q_1-q_2}{2}}\|\omega^{(k)}(\cdot,s)\|_{1,q_1}^\theta\|\omega^{(k)}(\cdot,s)\|_{p,q}^{1-\theta}\|j^{(k)}(\cdot,s)\|_{p,q}{\rm d}s\\
&\quad+\displaystyle B_2B_3\int_0^{t/2}(t-s)^{-1-\frac{q_1-q_2}{2}}\|j^{(k)}(\cdot,s)\|_{1,q_1}^\theta\|j^{(k)}(\cdot,s)\|_{p,q}^{1-\theta}\|\omega^{(k)}(\cdot,s)\|_{p,q}{\rm d}s\vspace{1ex}\\
&\leq\displaystyle B_2B_3((W_{k,1,q_1}^0)^\theta(W_{k,p,q}^0)^{1-\theta}J_{k,p,q}^0+(J_{k,1,q_1}^0)^\theta(J_{k,p,q}^0)^{1-\theta}W_{k,p,q}^0)\vspace{1ex}\\
&\quad\times\displaystyle\int_0^{t/2}(t-s)^{-1-\frac{q_1-q_2}{2}}s^{-\frac{q_1-\tilde{q_0}}{2}\theta}s^{\frac{3-q-2p}{2p}(1-\theta)}s^{\frac{3-q-2p}{2p}}ds\vspace{1ex}\\
&\leq\displaystyle B_{11}t^{\frac{\tilde{q_0}-q_1-1}{2}}((W_{k,1,q_1}^0)^\theta(W_{k,p,q}^0)^{1-\theta}J_{k,p,q}^0+(J_{k,1,q_1}^0)^\theta(J_{k,p,q}^0)^{1-\theta}W_{k,p,q}^0).
\end{array}\eqno(3.107)$$
Using (3.89), (3.106), (3.107) and Proposition 4, we have
$$\begin{array}{ll}
&J_{k+1,1,q_1}^1\leq B_{12}\|j_0\|_{1,\tilde{q_0}}+B_{12}W_{k,p,q}^0(J_{k,1,q_1}^1)^{\theta}(J_{k,p,q}^1)^{1-\theta}\vspace{1ex}\\
&\qquad\qquad+B_{12}((W_{k,1,q_1}^0)^\theta(W_{k,p,q}^0)^{(1-\theta)}J_{k,p,q}^1+(J_{k,1,q_1}^0)^\theta(J_{k,p,q}^0)^{(1-\theta)}W_{k,p,q}^1)\vspace{1ex}\\
&\qquad\qquad+B_{12}((W_{k,1,q_1}^0)^\theta(W_{k,p,q}^0)^{1-\theta}J_{k,p,q}^0+(J_{k,1,q_1}^0)^\theta(J_{k,p,q}^0)^{1-\theta}W_{k,p,q}^0).
\end{array}\eqno(3.108)$$
Therefore, we get from (3.101), (3.108), (3.12), (3.29) and (3.77) that
$$\begin{array}{ll}
&\quad W_{k+1,1,q_1}^1+J_{k+1,1,q_1}^1\vspace{1ex}\\
&\leq (B_7+B_{12})(\|\omega_0\|_{1,\tilde{q_0}}+\|j_0\|_{1,\tilde{q_0}})\vspace{1ex}\\
&\quad+B_7(W_{k,1,q_1}^0)^\theta(W_{k,p,q}^0)^{1-\theta}W_{k,p,q}^1+B_7(W_{k,1,q_1}^1)^\theta(W_{k,p,q}^1)^{1-\theta}W_{k,p,q}^0\vspace{1ex}\\
&\quad+B_7(J_{k,1,q_1}^1)^\theta(W_{k,p,q}^1)^{1-\theta}W_{k,p,q}^0+B_7(J_{k,1,q_1}^0)^\theta(J_{k,p,q}^0)^{1-\theta}J_{k,p,q}^1\vspace{1ex}\\
&\quad+B_7(W_{k,1,q_1}^0)^\theta(W_{k,p,q}^0)^{2-\theta}+B_7(J_{k,1,q_1}^0)^\theta(J_{k,p,q}^0)^{2-\theta}\vspace{1ex}\\
&\quad+B_{12}(W_{k,1,q_1}^0)^\theta(W_{k,p,q}^0)^{(1-\theta)}J_{k,p,q}^1
+B_{12}(J_{k,1,q_1}^0)^\theta(J_{k,p,q}^0)^{(1-\theta)}W_{k,p,q}^1\vspace{1ex}\\
&\quad+B_{12}W_{k,p,q}^0(J_{k,1,q_1}^1)^{\theta}(J_{k,p,q}^1)^{1-\theta}\vspace{1ex}\\
&\quad+B_{12}((W_{k,1,q_1}^0)^\theta(W_{k,p,q}^0)^{1-\theta}J_{k,p,q}^0+(J_{k,1,q_1}^0)^\theta(J_{k,p,q}^0)^{1-\theta}W_{k,p,q}^0)\vspace{1ex}\\
&\leq B_{13}(\|\omega_0\|_{1,\tilde{q_0}}+\|j_0\|_{1,\tilde{q_0}})+B_{13}(\|\omega_0\|_{p_0,q_0}+\|j_0\|_{p_0,q_0})^{2-\theta}\vspace{1ex}\\
&\quad+B_{13}(\|\omega_0\|_{p_0,q_0}+\|j_0\|_{p_0,q_0})^{2-\theta}(W_{k,1,q_1}^1+J_{k,1,q_1}^1)^\theta
\end{array}\eqno(3.109)$$
when (3.13), (3.30) and (3.76) hold. On the other hand, by Proposition 4,
it holds that
$$\begin{array}{ll}
&W_{1,1,q_1}^1+J_{1,1,q_1}^1\leq\sup\limits_{t\in\mathbb{R}^{+}}t^{\frac{1+q_1-\tilde{q_0}}{2}}\|\nabla G(\cdot,t)\ast\omega_0\|_{1,q_1}
+\sup\limits_{t\in\mathbb{R}^{+}}t^{\frac{1+q_1-\tilde{q_0}}{2}}\|\nabla G(\cdot,t)\ast j_0\|_{1,q_1}\vspace{1ex}\\
&\qquad\qquad\qquad\leq B_{14}(\|\omega_0\|_{1,\tilde{q_0}}+\|j_0\|_{1,\tilde{q_0}}).
\end{array}\eqno(3.110)$$

By (3.109), (3.110) and the arguments similar to those used to derive
(3.77) and (3.78), it holds that
$$W_{k,1,q_1}^1+J_{k,1,q_1}^1\leq 2\ \ \ {\rm for\ all}\ k\geq1,\eqno(3.111)$$
$$W_{k,1,q_1}^1+J_{k,1,q_1}^1\leq C_{12}(\|\omega_0\|_{1,\tilde{q_0}}+\|j_0\|_{1,\tilde{q_0}}+\|\omega_0\|_{p_0,q_0}+\|j_0\|_{p_0,q_0})\ \ \ {\rm for\ all}\ k\geq1,\eqno(3.112)$$
when (3.13), (3.30) and (3.76) hold and
$$(B_{13}+B_{14})(\|\omega_0\|_{1,\tilde{q_0}}+\|j_0\|_{1,\tilde{q_0}}+(\|\omega_0\|_{p_0,q_0}+\|j_0\|_{p_0,q_0})^{2-\theta})\leq1/2.\eqno(3.113)$$

{\bf Step 4: Estimates for  the terms $\bar{W}_{k,1,q_1}^1$,
$\bar{J}_{k,1,q_1}^1$.}

\medskip

By (3.3), Proposition 4 and (3.90), it holds that
$$\begin{array}{ll}
&\quad\|\nabla\omega^{(k+1)}\|_{1,q_1}\vspace{1ex}\\
&\leq\displaystyle\|\nabla G(\cdot,t)\ast \omega_0\|_{1,q_1}\vspace{1ex}\\
&\quad+\displaystyle B_4\int_0^t(t-s)^{-\frac{1}{2}-\frac{q_1-q_2}{2}}(\|\omega^{(k)}(\cdot,s)\|_{1,q_1}^\theta\|\omega^{(k)}(\cdot,s)\|_{p,q}^{(1-\theta)}\|\nabla\omega^{(k)}(\cdot,s)\|_{p,q}){\rm d}s\\
&\quad+\displaystyle B_4\int_0^t(t-s)^{-\frac{1}{2}-\frac{q_1-q_2}{2}}(\|\nabla\omega^{(k)}(\cdot,s)\|_{1,q_1}^\theta\|\nabla\omega^{(k)}(\cdot,s)\|_{p,q}^{(1-\theta)}\|\omega^{(k)}(\cdot,s)\|_{p,q}){\rm d}s\\
&\quad+\displaystyle B_4\int_0^t(t-s)^{-\frac{1}{2}-\frac{q_1-q_2}{2}}(\|\nabla j^{(k)}(\cdot,s)\|_{1,q_1}^\theta\|\nabla j^{(k)}(\cdot,s)\|_{p,q}^{(1-\theta)}\|j^{(k)}(\cdot,s)\|_{p,q}){\rm d}s\\
&\quad+\displaystyle B_4\int_0^t(t-s)^{-\frac{1}{2}-\frac{q_1-q_2}{2}}(\|j^{(k)}(\cdot,s)\|_{1,q_1}^\theta\|j^{(k)}(\cdot,s)\|_{p,q}^{(1-\theta)}\|\nabla j^{(k)}(\cdot,s)\|_{p,q}){\rm d}s.
\end{array}\eqno(3.114)$$
It was observed that
\begin{equation*}
\begin{array}{ll}
&\displaystyle\frac{1+q_1-\tilde{q_0}}{2}+1=\frac{1+q_1-q_2}{2}+\Big(\frac{q_2-\tilde{q_0}}{2}+1\Big),\vspace{1ex}\\
&\displaystyle\frac{2}{q_1-\tilde{q_0}}>\theta,\ \ \frac{2p}{2p-3+q}>1-\theta,\ \ \ \frac{2p}{3p-3+q}>1,\vspace{1ex}\\
&\displaystyle\frac{q_2-\tilde{q_0}}{2}+1=\frac{q_1-\tilde{q_0}}{2}\theta+\frac{2p-3+q}{2p}(1-\theta)+\frac{3p-3+q}{2p},\vspace{1ex}\\
&\displaystyle\frac{2}{1+q_1-\tilde{q_0}}>\theta,\ \ \ \frac{2p}{3p-3+q}>1-\theta,\ \ \ \frac{2p}{2p-3+q}>1,\vspace{1ex}\\
&\displaystyle\frac{q_2-\tilde{q_0}}{2}+1=\frac{1+q_1-\tilde{q_0}}{2}\theta+\frac{3p-3+q}{2p}(1-\theta)+\frac{2p-3+q}{2p}.
\end{array}
\end{equation*}
These facts together with (3.114), Proposition 4, Lemma 2, Young's
inequality and H\"{o}lder's inequality imply that
$$\begin{array}{ll}
&\bar{W}_{k+1,1,q_1}^1\leq B_{15}\|\omega_0\|_{1,\tilde{q_0}}+B_{15}\|t^{-\frac{1}{2}-\frac{q_1-q_2}{2}}\|_{L^{\frac{2}{q_1-q_2+1},\infty}(\mathbb{R}^{+})}\vspace{1ex}\\
&\qquad\qquad\quad\times\Big(\|(\|\omega^{(k)}(\cdot,t)\|_{1,q_1}^\theta\|\omega^{(k)}(\cdot,t)\|_{p,q}^{(1-\theta)}\|\nabla\omega^{(k)}(\cdot,t)\|_{p,q})\|_{L^{\frac{q_2-\tilde{q_0}+2}{2}}(\mathbb{R}^{+})}\vspace{1ex}\\
&\qquad\qquad\quad+\|\|\nabla\omega^{(k)}(\cdot,t)\|_{1,q_1}^\theta\|\nabla\omega^{(k)}(\cdot,t)\|_{p,q}^{(1-\theta)}\|\omega^{(k)}(\cdot,t)\|_{p,q}\|_{L^{\frac{q_2-\tilde{q_0}+2}{2}}(\mathbb{R}^{+})}\vspace{1ex}\\
&\qquad\qquad\quad+\|\|\nabla j^{(k)}(\cdot,t)\|_{1,q_1}^\theta\|\nabla j^{(k)}(\cdot,t)\|_{p,q}^{(1-\theta)}\|j^{(k)}(\cdot,t)\|_{p,q}\|_{L^{\frac{q_2-\tilde{q_0}+2}{2}}(\mathbb{R}^{+})}\vspace{1ex}\\
&\qquad\qquad\quad+\|\|j^{(k)}(\cdot,t)\|_{1,q_1}^\theta\|j^{(k)}(\cdot,t)\|_{p,q}^{(1-\theta)}\|\nabla j^{(k)}(\cdot,t)\|_{p,q}\|_{L^{\frac{q_2-\tilde{q_0}+2}{2}}}\Big)\vspace{1ex}\\
&\qquad\qquad\leq B_{15}\|\omega_0\|_{1,\tilde{q_0}}\vspace{1ex}\\
&\qquad\qquad\quad+B_{15}((\bar{W}_{k,1,q_1}^0)^\theta(\bar{W}_{k,p,q}^0)^{1-\theta}\bar{W}_{k,p,q}^1+(\bar{J}_{k,1,q_1}^0)^\theta(\bar{J}_{k,p,q}^0)^{1-\theta}\bar{J}_{k,p,q}^1)\vspace{1ex}\\
&\qquad\qquad\quad+B_{15}((\bar{W}_{k,1,q_1}^1)^\theta(\bar{W}_{k,p,q}^1)^{1-\theta}\bar{W}_{k,p,q}^0+(\bar{J}_{k,1,q_1}^1)^\theta(\bar{J}_{k,p,q}^1)^{1-\theta}\bar{J}_{k,p,q}^0).
\end{array}\eqno(3.115)$$
By (3.3), Proposition 4 and (3.91), it holds that
$$\begin{array}{ll}
&\quad\|\nabla j^{(k+1)}\|_{1,q_1}\vspace{1ex}\\
&\leq\|\nabla G(\cdot,t)\ast j_0\|_{1,q_1}\vspace{1ex}\\
&\quad+\displaystyle B_4\int_0^t(t-s)^{-\frac{1}{2}-\frac{q_1-q_2}{2}}\|\omega^{(k)}(\cdot,s)\|_{1,q_1}^\theta\|\omega^{(k)}(\cdot,s)\|_{p,q}^{(1-\theta)}\|\nabla j^{(k)}(\cdot,s)\|_{p,q}{\rm d}s\\
&\quad+\displaystyle 2B_4\int_0^t(t-s)^{-\frac{1}{2}-\frac{q_1-q_2}{2}}\|\omega^{(k)}(\cdot,s)\|_{p,q}\|\nabla j^{(k)}(\cdot,s)\|_{1,q_1}^\theta\|\nabla j^{(k)}(\cdot,s)\|_{p,q}^{(1-\theta)}{\rm d}s\\
&\quad+\displaystyle B_4\int_0^t(t-s)^{-\frac{1}{2}-\frac{q_1-q_2}{2}}\|j^{(k)}(\cdot,s)\|_{1,q_1}^\theta\|j^{(k)}(\cdot,s)\|_{p,q}^{(1-\theta)}\|\nabla\omega^{(k)}(\cdot,s)\|_{p,q}{\rm d}s.
\end{array}\eqno(3.116)$$
Inequality (3.116) together with the arguments similar to those 
used to derive (3.115) yields that
$$\begin{array}{ll}
&\bar{J}_{k+1,1,q_1}^1\leq B_{16}\|j_0\|_{1,\tilde{q_0}}+B_{16}\bar{W}_{k,p,q}^0(\bar{J}_{k,1,q_1}^1)^\theta(\bar{J}_{k,p,q}^1)^{1-\theta}\vspace{2ex}\\
&\qquad\qquad+B_{16}((\bar{W}_{k,1,q_1}^0)^\theta(\bar{W}_{k,p,q}^0)^{1-\theta}\bar{J}_{k,p,q}^1+(\bar{J}_{k,1,q_1}^0)^\theta(\bar{J}_{k,p,q}^0)^{1-\theta}\bar{W}_{k,p,q}^1).
\end{array}\eqno(3.117)$$
Hence, by (3.12), (3.29), (3.39), (3.85), (3.115) and (3.117), we have
$$\begin{array}{ll}
&\bar{W}_{k+1,1,q_1}^1+\bar{J}_{k+1,1,q_1}^1\leq B_{17}(\|\omega_0\|_{1,\tilde{q_0}}+\|j_0\|_{1,\tilde{q_0}})+B_{17}(\|\omega_0\|_{p_0,q_0}+\|j_0\|_{p_0,q_0})^{2-\theta}\vspace{2ex}\\
&\qquad\qquad\qquad\qquad\quad+B_{17}(\|\omega_0\|_{p_0,q_0}+\|j_0\|_{p_0,q_0})^{2-\theta}(\bar{W}_{k,1,q_1}^1+\bar{J}_{k,1,q_1}^1)^\theta
\end{array}\eqno(3.118)$$
whenever (3.13), (3.30), (3.40) and (3.86) hold. On the other hand,
applying Proposition 4 and Lemma 2, we can get
$$\begin{array}{ll}
&\quad\bar{W}_{1,1,q_1}^1+\bar{J}_{1,1,q_1}^1\vspace{1ex}\\
&\leq\|\|\nabla G(\cdot,t)\ast\omega_0\|_{1,q_1}\|_{L^{\frac{2}{1+q_1-\tilde{q_0}}}(\mathbb{R}^{+})}+\|\|\nabla G(\cdot,t)\ast j_0\|_{1,q_1}\|_{L^{\frac{2}{1+q_1-\tilde{q_0}}}(\mathbb{R}^{+})}\vspace{1ex}\\
&\leq B_{18}(\|\omega_0\|_{1,\tilde{q_0}}+\|j_0\|_{1,\tilde{q_0}}).
\end{array}\eqno(3.119)$$
Inequality (3.118) together with (3.119) and similar arguments to 
those in getting (3.77) and (3.78) leads to
$$\bar{W}_{k,1,q_1}^1+\bar{J}_{k,1,q_1}^1\leq 2\ \ \ {\rm for\ all}\ k\geq1,\eqno(3.120)$$
$$\bar{W}_{k,1,q_1}^1+\bar{J}_{k,1,q_1}^1\leq C_{13}(\|\omega_0\|_{1,\tilde{q_0}}+\|j_0\|_{1,\tilde{q_0}}+\|\omega_0\|_{p_0,q_0}+\|j_0\|_{p_0,q_0})\ \ \ {\rm for\ all}\ k\geq1,\eqno(3.121)$$
when (3.13), (3.30), (3.40) and (3.86) hold and
$$(B_{17}+B_{18})(\|\omega_0\|_{1,\tilde{q_0}}+\|j_0\|_{1,\tilde{q_0}})+B_{17}(\|\omega_0\|_{p_0,q_0}+\|j_0\|_{p_0,q_0})^{2-\theta}\leq1/2.$$

The rest of proof is essentially analogous to {\bf Steps 4} and {\bf 5}
in the proof of Theorem \ref{thm3.1} (i). We omit the details. $\hfill\Box$

We now turn to prove Theorem \ref{thm2.2}.

\begin{proof}[Proof of Theorem \ref{thm2.2}] Let $p_0=q_0=1$, then
$E_1=A_1$. This proves (i) of Theorem \ref{thm2.2}. Taking
$q_3=q_1$ and $\tilde{q_0}=1$ in $E_2$. One can easily get that
$$q_1=q_2=q_3=q,\ \ \ \tilde{p}=\frac{p'(3-q)}{3-q+p'}.$$
By the fact that $1<\tilde{p}<\min\{p,p'\}$ and $0\leq q_1-\tilde{q_0}<1$,
it holds that
$$q\in[1,2),\ \ \ \frac{2(3-q)}{4-q}<p<3-q.$$
This proves (ii) of Theorem \ref{thm2.2}.
\end{proof}

\bigskip

\quad\hspace{-20pt}{\bf Appendix}

\medskip

This appendix will be devoted to presenting some notations, lemmas
and propositions, which are frequently used in  our proof.

\bigskip

\quad\hspace{-20pt}{\bf Appendix A}

\medskip
Appendix A contains some propositions. We start with some basic
properties of Morrey spaces:

\medskip
\quad\hspace{-20pt}{\bf Proposition 1 (Basic properties of Morrey
space)}{\rm :}\quad{\it
\begin{itemize}
\item [(i)] $\mathcal{M}^{1,0}(\mathbb{R}^3)=\mathcal{M}^{1}
(\mathbb{R}^3)$ is the set of finite measures $\mathcal{M}$
and $\|\mu\|_1=|\mu|$;

\medskip
\item [(ii)] $\mathcal{M}^{p,0}(\mathbb{R}^3)=L^p(\mathbb{R}^3)$
for $p>1$;

\medskip
\item [(iii)] $L^p(\mathbb{R}^3)\subset L^{p,\infty}(\mathbb{R}^3)
\subset\mathcal{M}^p(\mathbb{R}^3)$ for $1<p<\infty$, where
$L^{p,\infty}(\mathbb{R}^3)$ denotes the Lorentz space;

\medskip
\item [(iv)] Inclusion relations: for $1\leq r, \ s,\ \tau, \
\lambda<\infty$ satisfying  $s\leq r$, $\tau\leq\lambda$ and
$\frac{3-\lambda}{r}=\frac{3-\tau}{s}$,
\begin{equation*}
\mathcal{M}^{r,\lambda}(\mathbb{R}^3)\subset\mathcal{M}^{s,\tau}(\mathbb{R}^3);
\end{equation*}

\medskip
\item [(v)] Interpolation inequality: if $\frac{1}{p}=\frac{1-\theta}{p_0}
+\frac{\theta}{p_1}$ and $0<\theta<1$, then
$$\mathcal{M}^{p_0}(\mathbb{R}^3)\cap
\mathcal{M}^{p_1}(\mathbb{R}^3)\subset\mathcal{M}^p(\mathbb{R}^3)$$
and
\begin{equation*}
\|\mu\|_p\leq\|\mu\|_{p_0}^{1-\theta}\|\mu\|_{p_1}^{\theta}\mbox{\ \  for \ } \mu \in\mathcal{M}^{p_0}(\mathbb{R}^3)\cap\mathcal{M}^{p_1}(\mathbb{R}^3).
\end{equation*}

\medskip
\item[(vi)] Let $1\leq p_1<p_3<p_2$,\ $0\leq\mu_1,\,\mu_2,\,\mu_3<3$
and $k\in(0,1)$ be such that
$$\frac{1}{p_3}=\frac{k}{p_1}+\frac{1-k}{p_2},\ \ \ \frac{\mu_3}{p_3}=\frac{\mu_1}{p_1}k+\frac{\mu_2}{p_2}(1-k).$$
Then
$$\|f\|_{p_3,\mu_3}\leq C\|f\|_{p_1,\mu_1}^k\|f\|_{p_2,\mu_2}^{1-k}.$$
\end{itemize}}

\begin{proof} It should be pointed out that (i)-(iii) and (v) of
Proposition 1 follows from \cite{GM2} and (iv) of Proposition 1
follows from \cite{Ka2}. The proof of (vi) of Proposition 1 follows
easily from the arguments same to those used to derive
\cite[Lemma 2.1 (iv)]{AF}. Here we omit the details.
\end{proof}

\medskip

\quad\hspace{-20pt}{\bf Remark 1.} {By Proposition 1 (iii), some
works of \cite{GM2,Ka2} can be regarded as the  generalizations
of the $L^p$ theory on NS problem in \cite{Gi,GM,Ka1,Wahl}.}

\medskip

\quad\hspace{-20pt}{\bf Propostion 2.}\quad
{\it Let $\mu=(\mu_1,\mu_2,\mu_3)$, $\nu=(\nu_1,\nu_2,\nu_3)$ and
$\omega=\nabla\times u$, then one has
\begin{itemize}
\item[(i)]{\bf (The H\"{o}lder inequality in Morrey space)}{\rm :} for
$1\leq r,\ s, \ m, \ \tau\leq\infty$ satisfying  $\frac{1}{r}=\frac{1}{m}
+\frac{1}{s}$ and $\frac{\theta}{r}=\frac{\lambda}{m}+\frac{\tau}{s}$, then
\begin{equation*}
\|\mu\nu\|_{r,\theta}\leq\|\mu\|_{m,\lambda}\|\nu\|_{s,\tau}.
\end{equation*}
Particularly, if $1\leq p\leq\infty$, $\frac{1}{\theta}
+\frac{1}{r}=1$ and $\frac{3}{p'}=\frac{\tau}{\theta}+\frac{s}{r}$,
then
 \begin{equation*}
 \begin{array}{ll}
&\|\mu\nu\|_p\leq\|\mu\|_p\|\nu\|_\infty,\vspace{1ex}\\
&\|\nabla u b\|_{p}\leq\|\omega\|_{\theta,\tau}\|b\|_{r,s}.
\end{array}
\end{equation*}
\item[(ii)] {\bf (Inequalities about the Biot-Savart kernel)}{\rm :}
\begin{itemize}
\item [(a)]If $\frac{1}{p}=\frac{1}{q}+\frac{1}{3}$ and
$\mu \in{\mathcal{M}}^{p}(\mathbb{R}^3)$, then
$K\ast\mu\in{\mathcal{M}}^{q}(\mathbb{R}^3)$ and
\begin{equation*}
\|K\ast \mu\|_q\leq\frac{1}{4\pi}\|\mu\|_p.
\end{equation*}
\item [(b)]If $0\neq p<3<q$ and $\mu\in{\mathcal{M}}^{p}
(\mathbb{R}^3)\cap{\mathcal{M}}^{q}(\mathbb{R}^3)$, then
$K\ast\mu\in L^{\infty}(\mathbb{R}^3)$ and
\begin{equation*}
\|K\ast \mu\|_\infty\leq\frac{1}{4\pi}\|\mu\|_p^{\big(\frac{1}{q}-\frac{1}{p}\big)^{-1}\big(\frac{2}{3}-\frac{1}{q'}\big)}\|\mu\|_q^{\big(\frac{1}{q}-\frac{1}{p}\big)^{-1}\big(\frac{1}{p'}-\frac{2}{3}\big)}.
\end{equation*}
Particularly, if we choose $q=2p$ and $\theta=\frac{2p}{3}$ with
$p\in(\frac{3}{2},3)$, then
\begin{equation*}
\|K\ast \mu\|_\infty\leq\|\mu\|_p^{\theta-1}\|\mu\|_{2p}^{2-\theta}.
\end{equation*}
\end{itemize}
\end{itemize}}

\medskip

The following proposition focuses the mapping properties for the
Riesz potential on the Morrey spaces (see \cite[Proposition 3.7]{Tay}).

\medskip

\quad\hspace{-20pt}{\bf Proposition 3.}\quad{\it Let $S(x)=|x|^{\delta-3}$
for some $\delta\in(0,3)$. Then, for $1<p<q<\infty$, $0\leq\theta<3$,
$\frac{1}{p}-\frac{1}{q}=\frac{\delta}{3-\theta}$ and
$f\in\mathcal{M}^{p,\theta}(\mathbb{R}^n)$, there exists a positive
constant $C$ independent of $f$ such that
$$\|S\ast f\|_{q,\theta}\leq C\|f\|_{p,\theta}.$$}

\medskip

By motivated by the idea in \cite{Ka2,Pe}, we have the following
inequalities for the heat kernel and Biot-Savart kernel:

\medskip

\quad\hspace{-20pt}{\bf Proposition 4.}\quad{\it
Let $1\leq q_1\leq q_2\leq\infty$, \  $0\leq\lambda_1\leq\lambda_2<3$,\ and
for $t>0$,
$$G(x,t)=(4\pi t)^{-\frac{3}{2}}\exp(-\frac{|x|^2}{4 t}).$$
We define the following operators as follows:
\begin{equation*}
T_{1,t}f=G(\cdot,t)*f(x),\ \ T_{2,t}f=\nabla G(\cdot,t)*f(x),\ \ T_{3,t}f=\partial_tG(\cdot,t)*f(x).
\end{equation*}
Then the operators $T_{i,t}\,(i=1,2,3)$ are bounded  from
$\mathcal{M}^{q_1,\lambda_1}(\mathbb{R}^3)$ to $\mathcal{M}^{q_2,
\lambda_2}(\mathbb{R}^3)$ and depend on $t$ continuously.
Furthermore, one has for $f\in \mathcal{M}^{q_1,\lambda_1}(\mathbb{R}^3)$,
$$t^{\frac{1}{2}(\alpha_1-\alpha_2)}\|T_{1,t}f\|_{q_2,\lambda_2}\leq C\|f\|_{q_1,\lambda_1},\eqno(A.1)$$
$$t^{\frac{1}{2}+\frac{1}{2}(\alpha_1-\alpha_2)}\|T_{2,t}f\|_{q_2,\lambda_2}\leq C\|f\|_{q_1,\lambda_1},\eqno(A.2)$$
$$t^{1+\frac{1}{2}(\alpha_1-\alpha_2)}\|T_{3,t}f\|_{q_2,\lambda_2}\leq C\|f\|_{q_1,\lambda_1},\eqno(A.3)$$
where $\alpha_i=\frac{3-\lambda_i}{q_i}\,(i=1,2)$ and constants $C$
depends on $q_1,\,q_2,\,\lambda_1,\,\lambda_2$.}

\begin{proof}
The case $\lambda_1=\lambda_2$ was proved in \cite{Wu}. We shall
adopt the ideas used in the proof of Proposition 2.4 in \cite{Wu}
to prove the case $\lambda_1<\lambda_2$. It was shown in \cite{Wu}
that
\begin{equation*}
\partial_xG(x,t)=ct^{-\frac{1}{2}}g_t(x),\ \ \ \partial_tG(x,t)=ct^{-1}g_t(x).
\end{equation*}
Moreover, the function $g_t$ is another radial function enjoying
the same properties as $G(x,t)$ does. Hence, we only prove (A.1)
since (A.2) and (A.3) can be proved similarly.

We now prove (A.1). Let $\beta=\frac{n-\lambda_1}{n-\lambda_2}$.
It is clear that $\beta>1$ since $\lambda_1<\lambda_2<n$. Fix
$t>0$ and $R>0$. It is clear that
$$\int_{|x-y|<R}|T_{1,t}f(y)|^{q_2}{\rm d}y\leq\|T_{1,t}f\|_{L^\infty}^{q_2-\frac{q_1}{\beta}}\int_{|x-y|<R}|T_{1,t}f(y)|^{\frac{q_1}{\beta}}{\rm d}y.\eqno(A.4)$$
By H\"{o}lder's inequality with exponents $p=\beta$ and $p'=\beta'$,
we see that
$$\int_{|x-y|<R}|T_{1,t}f(y)|^{\frac{q_1}{\beta}}{\rm d}y\leq CR^{n(1-\frac{1}{\beta})}\Big(\int_{|x-y|<R}|T_{1,t}f(y)|^{q_1}{\rm d}y\Big)^{{1}/{\beta}}.\eqno(A.5)$$
(A.5) together with (A.4) yields that
$$R^{-\lambda_2}\int_{|x-y|<R}|T_{1,t}f(y)|^{q_2}{\rm d}y\leq C\|T_{1,t}f\|_{L^\infty}^{q_2-\frac{q_1}{\beta}}\|T_{1,t}f\|_{q_1,\lambda_1}^{\frac{q_1}{\beta}}.\eqno(A.6)$$
Combining (A.6) with the known estimates (A.2) and (A.3) in
\cite{Wu} implies that
$$R^{-\lambda_2}\int_{|x-y|<R}|T_{1,t}f(y)|^{q_2}{\rm d}y\leq C\|f\|_{q_1,\lambda_1}^{q_2}t^{\frac{\lambda_1-n}{2q_1}(q_2-\frac{q_1}{\beta})}=C\|f\|_{q_1,\lambda_1}^{q_2}t^{\frac{q_2}{2}(\alpha_2-\alpha_1)},\eqno(A.7)$$
where $\alpha_i=\frac{3-\lambda_i}{q_i}\,(i=1,2)$. Then (A.1)
follows easily from (A.7).
\end{proof}

\bigskip

\quad\hspace{-20pt}{\bf Appendix B}

\medskip
Appendix B is devote to presenting some technique lemmas, which
are useful in the proof of of Theorem \ref{thm3.1}. Let $\mathcal{B}(a,b)$
be the beta function defined by
\begin{equation*}
\mathcal{B}(a,b)=\int_0^t(t-s)^{a-1}s^{b-1}{\rm d}s.
\end{equation*}
It is well-known that if $a,b>0$, then
$$\mathcal{B}(a,b)=\mathcal{C}(a,b)t^{a+b-1}\ \ \ {\rm with}\ \ \mathcal{C}(a,b)=\int_0^1(1-s)^{a-1}s^{b-1}{\rm d}s>0.$$

\medskip
\quad\hspace{-20pt}{\bf Lemma 1.}\quad{\it
Let $\{x_k\}_{k\geq0}$ be a sequence of nonnegative real numbers
and $f:[0,\infty)\rightarrow\mathbb{R}$ be a function. Suppose
that $f$ satisfies the following conditions:
\begin{itemize}
\item[{\rm (i)}] $f(x)=x$ have a positive root $x^{*}$;

\medskip

\item[{\rm (ii)}] $f(x)$ is monotonically non-decreasing in $[0,x^{*}]$;

\medskip

\item[{\rm (iii)}] $x_{k+1}\leq f(x_k)$ for all $k\geq0$.

\end{itemize}

If $x_0\leq x^{*}$, then $x_k\leq x^{*}$ for all $k\geq0$.}

\begin{proof}
It is clear that $x_0\leq x^{*}$. Assume that $x_{l-1}\leq x^{*}$
for all $l\in\{0,1,2,\ldots,k\}$ with some $k\geq1$. This assumption
yields that
$$x_k-x^{*}\leq f(x_{k-1})-x^{*}\leq f(x^{*})-x^{*}=0,$$
which yields $x_k\leq x^{*}$. This concludes the desired conclusion
by induction.
\end{proof}

As a direct application of Lemma 1, we can get the following
result.

\medskip

\quad\hspace{-20pt}{\bf Corollary 1.}\quad{\it
Let $a_1>0$, $b_1>0$ and $1-4a_1b_1>0$. Let $\{X_k\}_{k\geq0}$
be a sequence of nonnegative real numbers such that

\medskip
\begin{itemize}
\item[{\rm (i)}] $X_0\leq\frac{1-\sqrt{1-4a_1b_1}}{2b_1}$;

\medskip

\item[{\rm (ii)}] $X_{k+1}\leq a_1+b_1X_k^2$ for all $k\geq0$.

\end{itemize}
Then one has
\begin{equation*}
X_k\leq\frac{1-\sqrt{1-4a_1b_1}}{2b_1}=\frac{2a_1}{1+\sqrt{1-4a_1b_1}}<2a_1.
\end{equation*}}

The following results  play key roles in the proof of Theorem
\ref{thm3.1}.

\medskip

\quad\hspace{-20pt}{\bf Lemma 2.}\quad {\it
Let $G(x,t)=(4\pi t)^{-\frac{3}{2}}\exp(-\frac{|x|^2}{4 t})$
and $k=0,1$. We denote $\nabla^{0}u=u$ and $\nabla^1u=\nabla u$.
Suppose that for fixed $p_0\in[1,\infty)$, $q_0\in(0,3)$ and
$u\in\mathcal{M}^{p_0,q_0}(\mathbb{R}^n)$, there exists a constant
$A>0$ independent of $u,\,t$ such that
\begin{equation*}
\|\nabla^kG(\cdot,t)*u\|_{p,q}\leq At^{-\frac{1}{2}\big(\frac{3-q_0}{p_0}-\frac{3-q}{p}\big)-\frac{k}{2}}\|u\|_{p_0,q_0}
\end{equation*}
for all $p\in(p_0,\infty)$ and $q\in[q_0,3)$. Fix $p_0\in[1,\infty)$,
$q_0\in(0,3)$ and $u_0\in\mathcal{M}^{p_0,q_0}(\mathbb{R}^n)$, then
for any $p\in(p_0,\infty)$ and $q\in[q_0,3)$, there exists a constant
$C>0$ independent of $u_0$ such that
$$\|\|\nabla^kG(\cdot,t)*u_0\|_{p,q}\|_{L_t^{a}}\leq A\|u_0\|_{p_0,q_0},\eqno(A.8)$$
where $$a=2\Big(\frac{3-q_0}{p_0}-\frac{3-q}{p}+k\Big)^{-1}.$$}
\begin{proof}
Fix $p>p_0$, $q\geq q_0$ and $q\in(0,3)$. By our assumption we known
that there exist $p_1,p_2$ with $p_1<p$, $p_2<p$, $p_1<p_0<p_2$ such
that
$$\|\nabla^kG(\cdot,t)*u_0\|_{p,q}\leq At^{-\frac{1}{2}\big(\frac{3-q_0}{p_1}-\frac{3-q}{p}+k\big)}\|u_0\|_{p_1,q_0},\eqno(A.9)$$
$$\|\nabla^kG(\cdot,t)*u_0\|_{p,q}\leq At^{-\frac{1}{2}\big(\frac{3-q_0}{p_2}-\frac{3-q}{p}+k\big)}\|u_0\|_{p_2,q_0}.\eqno(A.10)$$
There exists a constant $\theta\in[0,1]$ such that $\frac{\theta}{p_1}
+\frac{1-\theta}{p_2}=\frac{1}{p_0}$. Let
\begin{equation*}
a_1=\frac{2}{\frac{3-q_0}{p_1}-\frac{3-q}{p}+k},\ \ \ a_2=\frac{2}{\frac{3-q_0}{p_2}-\frac{3-q}{p}+k}.
\end{equation*}
(A.9)-(A.10) together with the fact that
$\|t^{-a}\|_{L_t^{1/a,\infty}}=1$ yield that
$$\|\|G(\cdot,t)*u_0\|_{p,q}\|_{L_t^{a_1,\infty}}\leq A\|u_0\|_{p_1,q_0},\eqno(A.11)$$
$$\|\|G(\cdot,t)*u_0\|_{p,q}\|_{L_t^{a_2,\infty}}\leq A\|u_0\|_{p_2,q_0}.\eqno(A.12)$$
Notice that $\frac{\theta}{a_1}+\frac{1-\theta}{a_2}=\frac{1}{a}$. An
interpolation between (A.11) and (A.12) may yields (A.8).
\end{proof}

\medskip

\quad\hspace{-20pt}{\bf Lemma 3.}\quad{\it
Let $G(x,t)=(4\pi t)^{-\frac{3}{2}}\exp(-\frac{|x|^2}{4t})$. Assume
that given $p_0\in[1,\infty)$, $q_0\in(0,3)$ and
$u\in\mathcal{M}^{p_0,q_0}(\mathbb{R}^n)$, there exists a constant
$A>0$ independent of $u,\,t$ such that
\begin{equation*}
\|G(\cdot,t)*u\|_{p,q}\leq At^{-\frac{1}{2}\big(\frac{3-q_0}{p_0}-\frac{3-q}{p}\big)}\|u(\cdot,t)\|_{p_0,q_0}
\end{equation*}
for all $p\in(p_0,\infty)$ and $q\in[q_0,3)$. Fix $p_0\in[1,\infty)$,
$q_0\in(0,3)$, $p\in(p_0,\infty)$ and $q\in[q_0,3)$ such that
$$a=\frac{1}{2}\big(\frac{3-q_0}{p_0}-\frac{3-q}{p}\big)<\frac{1}{2}.$$
Let $u_0$ be a function such that
$$\|u_0\|_{p_0,q_0}\leq\frac{1}{4AB_1B_2}\min\Big\{\frac{1}{\mathcal{C}(a,1-2a)},
1\Big\},$$
and $\{u_k\}_{k\geq1}$ be a sequence of functions
satisfying the following
\begin{equation*}
\left\{\aligned
&u_1(x,t)=G(\cdot,t)*u_0(x),\\
&\|u_{k+1}(\cdot,t)\|_{p,q}\leq B_1\|G(\cdot,t)*u_0\|_{p,q}+B_2\int_0^t(t-s)^{a-1}\|u_k(\cdot,s)\|_{p,q}^2{\rm d}s
\endaligned\right.
\end{equation*}
for some $B_1,\,B_2>0$. Then, for all $k\geq1$, it holds that

\begin{itemize}
\item[{\rm (i)}] $\sup\limits_{t>0}t^a\|u_k\|_{p,q}\leq 2AB_1\|u_0\|_{p_0,q_0},$

\medskip

\item[{\rm (ii)}] $\|\|u_k\|_{p,q}\|_{L_t^{1/a}}\leq 2AB_1\|u_0\|_{p_0,q_0}$.
\end{itemize}}

\begin{proof}
Letting $A_k=\sup_{t>0}t^a\|u_k\|_{p,q}$. By our assumption, it holds that
\begin{equation*}
\begin{array}{ll}
&A_{k+1}\leq\displaystyle AB_1\|u_0\|_{p_0,q_0}+B_2t^a\int_0^t(t-s)^{a-1}\|u_k(s)\|_{p,q}^2{\rm d}s\vspace{1ex}\\
&\qquad\leq\displaystyle AB_1\|u_0\|_{p_0,q_0}+B_2\mathcal{C}(a,1-2a)\sup\limits_{t>0}t^{2a}\|u_k\|_{p,q}^2\vspace{1ex}\\
&\qquad\leq AB_1\|u_0\|_{p_0,q_0}+B_2\mathcal{C}(a,1-2a)A_k^2.
\end{array}
\end{equation*}
Notice that $1-4AB_1B_2\mathcal{C}(a,1-2a)\|u_0\|_{p_0,q_0}>1$. Invoking
Corollary 1, we can get
\begin{equation*}
A_k<2AB_1\|u_0\|_{p_0,q_0}.
\end{equation*}
This proves (i). Let $\tilde{A}_k=\|\|u_k\|_{p,q}\|_{L_t^{1/a}}$.
It follows from Lemma 2 and Young's inequality that
\begin{equation*}
\|f*g^2\|_{L_t^{1/a}}\leq\|f\|_{L_t^{{1}/{(1-a)},\infty}}\|g\|_{L_t^{1/a}}^2,
\end{equation*}
\begin{equation*}
\begin{array}{ll}
&\tilde{A}_{k+1}\leq B_1\|\|G(\cdot,t)*u_0\|_{p,q}\|_{L_t^{1/a}}+B_2\Big\|\int_0^t(t-s)^{a-1}\|u_k(s)\|_{p,q}^2{\rm d}s\Big\|_{L_t^{1/a}}\\
&\qquad\leq AB_1\|u_0\|_{p_0,q_0}+\|t^{a-1}\|_{L_t^{1/(1-a),\infty}}B_2\|\|u_k\|_{p,q}\|_{L_t^{1/a}}^2\\
&\qquad\leq AB_1\|u_0\|_{p_0,q_0}+B_2\tilde{A}_k^2,
\end{array}
\end{equation*}
which, along  with Corollary 1, yields (ii).
\end{proof}

Finally, we would like to remark that the following general formulas
are useful for calculations with vector fields in $\mathbb{R}^3$.
 \begin{equation*}
 \begin{array}{ll}
&\nabla(F\cdot G)=(F\cdot\nabla) G+(G\cdot\nabla) F+F\times(\nabla\times G)+G\times(\nabla\times F),\vspace{1ex}\\
&{\rm div}(F\times G)=G\cdot(\nabla\times F)-F\cdot(\nabla\times G),\vspace{1ex}\\
&\nabla\times(F\times G)=F{\rm div}G-G{\rm div}F+(G\cdot\nabla)F-(F\cdot\nabla) G.
\end{array}
  \end{equation*}

\medskip

\quad\hspace{-20pt}{\bf Acknowledgements.} The research of Feng Liu was
supported partly by NNSF of China (grant No. 11701333). The research of
Shengguo Zhu was supported in part by the Royal Society--Newton
International Fellowships ( grant No. NF170015), and the Monash
University-Robert Bartnik Visiting Fellowships.

\medskip

{\bf Conflict of Interest:} The authors declare that they have no conflict of interest.

\end{document}